\newtheorem{assumption}{Assumption}
\newtheorem{algorithm}{Algorithm} 
\def\bko{{\rm 1\kern-.17em l}}
\newcommand{\an}[1]{{\color{black}#1}}
\newcommand{\fy}[1]{{\color{black}#1}}
\newcommand{\us}[1]{{\color{black}#1}}
\newcommand{\fyR}[1]{{\color{black}#1}}
\def\be{\begin{enumerate}}
\def\ee{\end{enumerate}}
\def\argmin{\mathop{\rm argmin}}
 \newcommand{\remove}[1]{}
\newcommand{\EXP}[1]{\mathsf{E}\!\left[#1\right] }
\def\sF{\mathcal{F}}
\def\Real{\mathbb{R}}
\def\g{\gamma}
\def\e{\epsilon}
\def\a{\alpha}
\def\argmin{\mathop{\rm argmin}}
\title{On stochastic mirror-prox algorithms for stochastic Cartesian variational inequalities: randomized block coordinate and optimal averaging schemes} 
\author{Farzad~Yousefian,   
        Angelia~Nedi\'c, and   
		Uday V.~Shanbhag} \institute{Yousefian is in the School of Indust. Engg. \& Mgnt., Oklahoma State Univ., Stillwater, OK 74074, USA, Nedi\'c is in the School of Elec., Comp., and Energy Engg., Arizona State Univ., Tempe, AZ 85287, USA,
		while Shanbhag is with the Department of Indust. and
			Manuf. Engg., Penn. State Univ.,
						  University Park, PA 16802, USA. They are
							  reachable at 
\tt\small{farzad.yousefian@okstate.edu}, \tt\small{angelia.nedich@asu.edu}, and \tt\small{
	udaybag@psu.edu}.}
\begin{document}
\maketitle

\begin{abstract}
Motivated by multi-user optimization problems and non-cooperative Nash games in uncertain regimes, we consider stochastic Cartesian variational inequality problems (SCVI) where the set is given as the Cartesian product of a collection of component sets. First, we consider the case where the number of the component sets is large. For solving this type of problems the classical stochastic approximation methods and their prox generalizations are computationally inefficient as each iteration becomes costly. To address this challenge, we develop a randomized block stochastic mirror-prox (B-SMP) algorithm, where at each iteration only a randomly selected block coordinate of the solution vector is updated through implementing two consecutive projection steps. Under standard assumptions on the problem and settings of the algorithm, we show that when the mapping is strictly pseudo-monotone, the algorithm generates a sequence of iterates that converges to the solution of the problem \fyR{almost surely.} To derive rate statements, we assume that the maps are strongly \fyR{pseudo-monotone} and obtain \fyR{a non-asymptotic rate of $\mathcal{O}\left(\frac{d}{k}\right)$ in mean-squared error}, where $k$ is the iteration number and $d$ is the number of \fyR{component sets}. Second, we consider large-scale stochastic optimization problems with convex objectives. For this class of problems, we develop a new averaging scheme for the B-SMP algorithm. Unlike the classical averaging stochastic mirror-prox (SMP) method where a decreasing set of weights for the averaging sequence is used we consider a different set of weights that are characterized in terms of the stepsizes. We show that by using \fyR{such} weights, the objective \fyR{values} of the averaged sequence converges to \fyR{the} optimal value in the mean sense at the rate \fyR{$\mathcal{O}\left(\frac{\sqrt{d}}{\sqrt{k}}\right)$}. Both of the rate results \fyR{appear} to be new in the context of SMP algorithms. Third, we consider \fyR{SCVIs and develop an SMP algorithm} that employs the new weighted averaging scheme. We show that the expected value of a suitably defined gap function converges to zero at the optimal rate \fyR{$\mathcal{O}\left(\frac{1}{\sqrt{k}}\right)$}, extending the previous rate results of the SMP algorithm.
\end{abstract}


\section{Introduction}\label{sec:introduction}
Variational inequality (VI) problems, first introduced in 1960s, provide a unifying framework for capturing a wide range of applications arising in operations research, finance, and economics (cf. \cite{facchinei02finite, Rockafellar98, LanAccVI, wang2015, IusemIncrementalVI}). Given a set $X \subset \Real^n$ and a mapping $F:X \rightarrow \mathbb{R}^n$, a variational inequality problem is denoted by VI$(X,F)$, where the goal is to find a vector $x^* \in X$ such that \begin{align}\label{def:VI}
\langle F(x^*),x-x^*\rangle\geq 0\qquad \hbox{for all }x \in X.\tag{VI}
\end{align} 
In the presence of uncertainty associated with the mapping $F$ or the set $X$, stochastic generalizations of variational inequalities have been developed and \fyR{studied} \cite{Wets12SVI, Xu10,Nem11}. Such situations occur \fyR{in two cases: (i) when }the mapping $F$ is characterized via an expected value of a stochastic mapping. \fyR{In this case, } when the underlying probability distribution is unknown \fyR{or} the number of random variables is large, direct application of deterministic methods becomes challenging; \fyR{(ii) The same applies when the set $X$} is characterized by uncertainty. While the the former case finds relevance in stochastic optimization and stochastic Nash games, the latter occurs in areas such as traffic equilibrium problems with uncertain link capacities (cf. Sec. 4 in \cite{Wets12SVI}). 

Motivated by multi-user stochastic optimization problems and non-cooperative Nash games, in this paper, we are interested in solving  stochastic Cartesian variational
inequality problems. Consider (deterministic) sets $X_i \in \Real^{n_i}$ for $i=1,\ldots,d$. Let the random
vector $\xi$ be defined as $\xi:\Omega
\rightarrow \Real^m$ and {$(\Omega,{\cal F}, \mathbb{P})$ denote the
associated probability space. A stochastic Cartesian variational
inequality (SCVI) is a problem of the form \eqref{def:VI} such that the set $X$ and mapping $F$ are defined as follows:
\begin{align}\label{def:XFofSCVI}
X\triangleq\prod_{i=1}^d X_i, \quad F(x)\triangleq\EXP{F(x,\xi)},
\end{align}
where $F(x,\xi): X\times \Real^m \to \Real^n$ denotes the random mapping, the mathematical expectation is taken with respect to the random vector $\xi$, and $n\triangleq\sum_{i=1}^d n_i$.
Here the set $X$ is a Cartesian product of the component sets $X_i$.  
This problem can be represented as determining a vector $x^*=({x^*}^1; {x^*}^2;\ldots; {x^*}^d)\in X$ such that for all $i=1,\ldots,d$,
\begin{align}\label{eqn:SCVI}
\langle\EXP{F_i(x^*,\xi)},x^i-{x^*}^i\rangle \geq 0 \qquad \hbox{ for all $x^i \in X_i$} \tag{SCVI}.
\end{align} 
where $x^i$ and the component mapping $F_i(x,\xi)$ are such that \[x=(x^1;x^2;\ldots;x^d), \quad\hbox{and} \quad F(x,\xi)=(F_1(x,\xi);\ldots;F_d(x,\xi)).\]
Throughout, we assume that 
the expected mappings $\EXP{F_i(x,\xi)}:\Real^n\to\Real^{n_i}$ are well-defined (i.e., the expectations are finite). Our work is motivated by the following two classes of problems that can be both represented \fyR{by} \eqref{eqn:SCVI}:

\noindent \fyR{\textbf{Stochastic non-cooperative Nash games:}}
Consider a classical non-cooperative Nash game among $d$ players (agents). Each player is associated with a strategy set and a cost function. Let $x^i$ denote the strategy (decision variable) of the $i$th player, where $x_i$ belongs to the set of all possible actions of the $i$th player denoted by $X_i \subset \Real^{n_i}$. Let $f_i((x^i;x^{-i}),\xi)$ denote the random cost function of the $i$th player that is in terms of action of the player $x^i$, actions of other players denoted by $x^{-i}$, and a random variable $\xi$ representing the state of the game. The goal of each player is to minimize the expected value of the cost function for any arbitrary strategies of the other players, i.e., $x^{-i}$, by solving the following problem:
\fyR{\begin{align*}
\displaystyle \mbox{minimize} & \qquad  {\EXP{f_i((x^i;x^{-i}),\xi)}} \\
\mbox{subject to} & \qquad x^i \in X_i.
\end{align*}}
A Nash equilibrium is a tuple of strategies $x^*=({x^*}^1;{x^*}^2;\ldots;{x^*}^d)$ where no player can obtain a lower cost by deviating from his strategy if the strategies of the other players remain unchanged. 
Under the validity of the interchange between the expectation and the
derivative operator, the resulting equilibrium conditions of this stochastic Nash
game are compactly captured \fyR{by VI$(X,F)$}
where 
$ X \triangleq \prod_{i=1}^d X_i$ and {$F(x) = (F_1(x);\ldots; F_d(x))$}
with $F_i(x)=\EXP{\nabla_{x^i} f_i(x,\xi)}$.
Such problems arise in communication
networks~\cite{alpcan02game,alpcan03distributed,yin09nash2}, 
competitive interactions in cognitive radio
networks~\cite{aldo1,scutari10monotone,koshal11single2}, and in power markets~\cite{KShKim11,KShKim12,SIGlynn11}. 

\noindent \fyR{\textbf{Block structured stochastic optimization:}} Motivated by multi-agent
decision-making problems such as rate allocation problems in communication
networks~\cite{Kelly98,Srikant04,ShakSrikant07}, we consider the following block structured stochastic optimization problem:
\begin{align}\label{prob:SOP}
\displaystyle \mbox{minimize} & \qquad f(x)\triangleq\EXP{f(x,\xi)}\tag{SCOP}\\
\mbox{subject to} & \qquad x \in X\triangleq\prod_{i=1}^d X_i,\notag
\end{align}
where $\xi \in \Real^m$ is a random variable associated with a probability distribution, function $f(\cdot,\xi):X\to \Real$ is continuous for all $\xi$, and the set $X \in \Real^n$ is the Cartesian product of the sets $X_i \in \Real^{n_i}$, \fyR{with} $n\triangleq\sum_{i=1}^d n_i$. \fyR{Note} that the optimality conditions of  \eqref{prob:SOP} can be represented as \eqref{eqn:SCVI}, where $F_i(x)=\EXP{\nabla_{x^i} f(x,\xi)}$.

Our primary interest in this paper lies in solving \eqref{eqn:SCVI} when the number of component sets $X_i$, i.e., $d$, is very large. Computing the solution to this class of problems is challenging mainly due to presence of \fyR{\textit{uncertainty} and \textit{high dimensionality}} of the solution space. In what follows, we review some of the existing methods in addressing these challenges:

\noindent \textit{Addressing uncertainty in optimization and VI regimes:} Contending with uncertainty in solving variational inequalities has been carried out through the application of Monte-Carlo sampling schemes. Of these, sample average approximation (SAA) scheme proposes a framework in that the expected value of the stochastic mapping is approximated via the average over a large number of samples (cf. \cite{shap03sampling}, Chapter 6). However, it has been discussed that the SAA approach is computationally inefficient when the sample size is large \cite{nemirovski_robust_2009}. A counterpart to SAA schemes is the stochastic approximation (SA) methods and their generalizations where at each iteration, a sample (or a small batch) of the stochastic mapping is used to update the solution iterate. It was first in 1950s when Robbins and Monro \cite{robbins51sa} developed the SA method to address stochastic root-finding problems. Due to their computational efficiency in addressing problems with a large number of samples and also their adaption to on-line settings,  SA methods have been very successful in solving optimization and equilibrium problems with uncertainties. \fyR{Jiang and Xu} \cite{xu2008} appear amongst the first who applied SA methods to solve stochastic variational inequalities with smooth and strongly monotone mappings. Extension of that work was studied by Koshal et al. \cite{koshal12regularized} addressing merely monotone stochastic VIs. More recently, we developed a regularized smoothing SA method to address stochastic VIs with non-Lipschitzian and merely monotone mappings \cite{Farzad3}. In recent years, prox generalization of SA methods were developed \cite{Nem04,nemirovski_robust_2009,Nem11,Lan-VI-13} for solving smooth and nonsmooth stochastic convex optimization problems and variational inequalities. In \cite{nemirovski_robust_2009}, a stochastic mirror descent (SMD) method is proposed to solve stochastic optimization problems with convex objectives. SMD method generalizes the SA method in that the Bregman distance function is employed in vector spaces equipped with non-Euclidean norms. The convergence properties and rate analysis of this class of solution methods rely on the monotonicity of the gradient mapping. To address variational inequalities requiring weaker assumptions, i.e., pseudo-monotone mappings, Korpelevich \cite{korp76} developed the extragradient method in 1970s. Since the extragradient method requires two projections per iteration, the computational complexity is twice than that of its classical gradient counterpart. However, it benefits significantly from the extra step by addressing VIs with weaker assumptions, i.e., VIs with pseudo-monotone mappings. Dang et al. \cite{Lan-VI-13} developed non-Euclidean extragradient methods addressing generalized monotone VIs and derived the convergence rate statements under smoothness properties of the problem and the distance generator function. In \cite{Nem11}, Juditsky et al. developed a stochastic mirror-prox (SMP) method to solve stochastic VIs with monotone operators. Loosely speaking, the SMP method is the prox generalization of extragradient scheme to stochastic settings. It is shown that under an averaging scheme, the SMP method generates iterates that converge to a weak solution of the stochastic VI. Kannan and Shanbhag (see \cite{Aswin17arxiv,Aswin-ACC14}) studied almost sure convergence of extragradient algorithms in solving stochastic VIs with pseudo-monotone mappings and derived optimal rate statements under a strong pseudo-monotone condition. \fyR{ Recently, Iusem et al. \cite{IusemExtra2017} developed an extragradient method with variance reduction for solving stochastic variational inequalities requiring only pseudo-monotonicity.} Motivated by the recent developments in extragradient methods and their generalizations, in this paper, we consider SMP methods.

\noindent \textit{Addressing high dimensionality in optimization and VI regime:} When the dimensionality of the solution space is huge, e.g., $n=\sum_{i=1}^dn_i$ exceeds $10^{12}$, the direct implementation of the aforementioned solution methods becomes problematic. Specifically, SMD and SMP algorithms both require performing arithmetic operations of order $n$ per iteration. Moreover, the projection step (i.e., minimizing the prox function) in both methods is a another source of inefficiency for huge size problems. For problems with Cartesian solution spaces, the computational effort for projection can be reduced by decomposing the projection into $d$ projections corresponding to the set components $X_i$ at each iteration \cite{Farzad2}. However, this approach still requires updating and storing $d$ component solution iterates of sizes $n_i$ for all $i=1,\ldots,d$ per iteration. To address this issue and improve the efficiency of the underlying solution method, coordinate descent (CD) and more generally block coordinate descent (BCD) methods have been developed and studied in recent decades. Ortega and Rheinboldt \cite{ortega70} studied the concept of such approaches as ``univariate relaxation". The convergence properties of the CD methods was studied in 80s and 90s by researchers such as Tseng \cite{TsengCD1,TsengCD2,TsengCD3}, Bertsekas, and Tsitsiklis \cite{BertTsitBookPar}, and more recently in \cite{Wotao13,Richtarik14, Richtarik15} (see \cite{wrightCD15} for \fyR{detailed} review on CD methods). Nesterov \cite{nest10} appears amongst the first \fyR{to provide} a comprehensive iteration complexity analysis for randomized BCD methods for \fyR{solving optimization problems with smooth and convex objective functions.} In a recent work, Dang and Lan \cite{LanSBMD} developed a stochastic block coordinate mirror descent algorithm (SBMD) to address smooth and nonsmooth stochastic optimization problems with Cartesian constraint sets. in each iteration of SBMD scheme, only a randomly selected block of \fyR{iterates} is updated to improve the iteration complexity substantially. 

\textit{Summary and main contributions}:  \fyR{We develop two variants of the SMP algorithm.} In the first part of the paper, motivated by recent advancements in BCD methods, we develop a randomized block stochastic mirror-prox (B-SMP) algorithm to address SCVIs when the number of component sets is huge. At each iteration of the B-SMP scheme, first a block is selected randomly. Then, the selected block of the solution iterate is updated through performing two successive projection steps on the corresponding component set. We provide the convergence and rate analysis of the B-SMP algorithm under both non-averaging and averaging schemes as will be explained in details in the following discussion. In the second part of the paper, to address SCVIs with small or medium number of component sets, we develop a SMP method in that at each iteration, the solution iterate is updated at all the blocks through two projections on each component set. For this class of algorithms, we employ a new averaging scheme and derive an optimal rate statement. In what follows, we summarize the main contributions of our work:
 
\noindent \textit{(i) Addressing large-scale VIs}: While both SMD and SMP algorithms have been employed in the literature to address VIs \cite{Nem04, Nem11, Farzad3}, our algorithm appears to be the first that is capable of computing the solution to a large-scale Cartesian variational inequality in both deterministic and stochastic regimes. To this end, in contrast with the SBMD algorithm in \cite{LanSBMD}, we employ a randomized block scheme for the mirror-prox method. It is worth noting that the analysis of the B-SMP method \fyR{cannot be directly extended} mainly because the same randomly selected variable for coordinates is used in both projection steps, resulting a dependency between the uncertainty involved in the two projection steps. \\
\noindent \textit{(ii) Convergence and rate analysis}:  In contrast with earlier work on stochastic mirror-prox methods \cite{Nem11} where the convergence and rate analysis is performed under monotonicity assumption for stochastic VIs using averaging schemes, we first consider a non-averaging random variant of the mirror-prox method. We study the properties of the iterate generated by the B-SMP algorithm under pseudo-monotonicity assumption (see Lemma \ref{lemma:pseudo}). Next, under strict pseudo-monotonicity assumption, we prove convergence of the generated iterate to the solution of \eqref{eqn:SCVI} in an almost sure sense, extending the results of \cite{Aswin-ACC14} to the block coordinate settings. When considering SCVIs with strongly pseudo-monotone mappings, we obtain a bound of the order $\mathcal{O}\left(\frac{d}{k}\right)$ on the mean squared error, where $k$ is the iteration number and $d$ is the number of blocks extending results in \cite{Aswin-ACC14} to the block coordinate regime. This result differs from the rate analysis of \cite{LanSBMD} for the SBMD algorithm for stochastic optimization problems with strongly convex objectives in three aspects: (a) While the SBMD method addresses the optimization regime, our rate result applies to the broader class of problems, i.e., SVCIs; (b) The assumption of strong pseudo-monotonicity in our work is weaker than the strong monotonicity of the gradient mapping in \cite{LanSBMD}; (c) \fyR{In contrast with the SBMD scheme where an averaging scheme with a constant stepsize rule is employed for addressing problem (SCOP) {(cf. Corollary 2.2. in \cite{LanSBMD})}, here we use a non-averaging randomized block coordinate scheme for problem (SCVI). Note that averaging schemes with constant steplength necessarily produce sequences whose limit points are approximate solutions, while our scheme produces sequences that are asymptotically a.s. convergent. While in \cite{LanSBMD} the convergence rate is derived both under convexity and strong convexity assumptions of the objective function, here we are able to derive the convergence rate under a weaker assumption of strong pseudo-monotonicity of the mapping (see Proposition \ref{prop:ratestrongpse}).} \\
\noindent \textit{(iii) Developing optimal weighted averaging schemes}: In the second  part of our analysis, to derive rate statements for the B-SMP method under monotonicity assumption, we restrict our attention to the class of stochastic convex optimization problems with Cartesian product of feasible sets. We develop a new averaging scheme for the B-SMP algorithm, \fyR{which updates a weighted average of the form:}
\begin{align*}
\bar x_k\triangleq\sum_{t=0}^k\alpha_t x_t, \qquad \hbox{for all } k\geq 0, 
\end{align*}
where $x_0,\ldots, x_t$ are the generated iterates of the B-SMP method and $\alpha_t$ is the weight assigned to the iterate $x_t$. In the literature, averaging schemes have been employed in addressing stochastic optimization problems and SVIs to derive error bounds. In the 90s, Polyak and Juditsky \cite{Polyak92} employed averaging schemes in stochastic approximation schemes and studied their convergence properties for a variety of problems. Later, Nemirovski et al. developed averaging schemes for the stochastic mirror descent method \cite{nemirovski_robust_2009} and stochastic mirror-prox algorithm \cite{Nem11} and derived the convergence \fyR{rate $\mathcal{O}\left(\frac{\ln(k)}{\sqrt{k}}\right)$} when $\alpha_k$ is chosen to be $\frac{\g_t}{\sum_{t=0}^k\g_t}$ and $\g_t$ is the stepsize sequence. They also showed that the optimal convergence \fyR{rate $\mathcal{O}\left(\frac{1}{\sqrt{k}}\right)$} can be achieved if a window-based averaging scheme is employed. More recently, Nedi\'c and Lee \cite{Nedic14} developed a new averaging scheme for SMD algorithm in that $\alpha_t$ is given as 
$\frac{\g_t^{-1}}{\sum_{t=0}^k\g_t^{-1}}$. They showed that under this different set of weights, the SMD algorithm admits the optimal rate of convergence without requiring a window-based averaging scheme. Motivated by this contribution and our previous work on SA method \cite{Farzad3} and stochastic extragradient scheme \cite{Farzad-CDC14}, in this work, we extend this result by considering $\alpha_t$ as $\frac{\g_t^{r}}{\sum_{t=0}^k\g_t^{r}}$ where $r$ is an arbitrary scalar. We show that for any arbitrary fixed $r<1$, the B-SMP algorithm generates iterates such that the objective function of $\bar x_k$ converges to its optimal value in mean admitting the convergence \fyR{rate $\mathcal{O}\left(\frac{\sqrt{d}}{\sqrt{k}}\right)$}. We also analyze the constant factor in terms of the problem and algorithm parameters and the parameter $r$. \fyR{We note that complexity analysis for averaging schemes in combination with constant stepsizes has been done in \cite{LanSBMD}. Unlike, the work in \cite{LanSBMD}, our averaging schemes are employed with a diminishing stepsize.}

In the last part of the paper, we consider SCVIs \fyR{with monotone mappings.} We employ the new averaging scheme in the classical SMP algorithm and show that the optimal convergence rate of the order $\mathcal{O}\left(\frac{1}{\sqrt{k}}\right)$ can be achieved without requiring a window-based averaging, extending the results of \cite{Nem11}.

The rest of the paper is organized as follows. In Section \ref{sec:pre}, we \fyR{discuss} the prox functions and some of their main properties. We also cite a few results that will be used later in the analysis of the paper. Section \ref{sec:alg} contains the outline of the B-SMP algorithm and the main assumptions. In Section \ref{sec:conv}, we present the convergence theory for the developed algorithm in an almost sure sense, and present the rate analysis of the proposed scheme for SCVIs and SCOPs in Section \ref{sec:rate}. In Section \ref{sec:SMP}, we present the SMP algorithm for SCVIs with optimal averaging schemes. Lastly, we \fyR{provide some} concluding remarks in Section \ref{sec:conRem}.

\fyR{\textbf{Notation:}}\label{sec:not}
For any vector $x \in \Real^n$, we let $x^i \in \Real^{n_i}$ denote the $i$th block coordinate of $x$ such that $x=(x^1;x^2;\ldots,x^d)$. We use subscript $i$ to denote the $i$th block of a mapping in $\Real^n$, e.g., for $F:\Real^n \to \Real^n$, we have $F(x)=(F_1(x);F_2(x);\ldots;F_d(x))$. For any $i = 1,\ldots,d$ and $x\in \Real^n$, we let $x^{-i}$ denote the collection of blocks $x^j$ for any $j \neq i$, such that $x=(x^i;x^{-i})$. For any $i=1,\ldots,d$, $\|\cdot\|_i$ denotes the general norm operator on $\Real^{n_i}$ and its dual norm is defined by $\|x\|_{*i}=\sup_{\|y\|\leq 1}\langle y,x\rangle$ for $x,y \in \Real^{n_i}$. For any $u,v \in \Real^{n_i}$, $\langle u,v\rangle$ denotes the inner product of vectors $u$ and $v$ in $\Real^{n_i}$. When $u,v \in \Real^n$, we let $\langle u,v\rangle\triangleq\sum_{i=1}^d \langle u^i,v^i\rangle$. We define norm $\|\cdot\|$ as $\|x\|^2\triangleq\sum_{i=1}^d\|x^i\|_i^2$ for any $x \in \Real^n$, and denote its dual norm by$\|\cdot\|_*$. We write $\textbf{I}_n$ to denote the identity matrix in $\Real^n$. We use $\EXP{z}$ to denote the expectation of a random variable~$z$ and $Prob(A)$ to denote the probability of an event $A$. We let SOL$(X,F)$ denote the solution set of problem \eqref{eqn:SCVI}. 
\section{Preliminaries}\label{sec:pre}
In this section, we \fyR{provide} the background for the prox mappings and review some of their main properties. More \fyR{details} on prox mappings can be found in \cite{Nemir2000}. 

Let $i \in \{1,\ldots,d\}$. A function $\omega_i:X_i \to \Real$ is called a distance generating function with modulus $\mu_{\omega_i}>0$ with respect to norm $\|\cdot\|_i$, if $\omega_i$ is a continuously differentiable and strongly convex function with parameter $\mu_{\omega_i}$ with respect to $\|\cdot\|_i$, i.e.,  
\begin{align}
\omega_i(y) \geq \omega_i(x) + \langle \nabla \omega_i(x), y-x\rangle +\frac{\mu_{\omega_i}}{2}\|x-y\|_i^2 \quad \hbox{for all } x,y \in X_i.
\end{align}
Throughout the paper, we assume that the function $\omega_i$ has Lipschitz continuous gradients with parameter $L_{\omega_i}$, i.e., 
\begin{align}
\omega_i(y)\leq \omega_i(x)+\langle \nabla \omega_i(x),y-x \rangle +\frac{L_{\omega_i}}{2}\|x-y\|_i^2 \quad \hbox{for all } x,y \in X_i.
\end{align}
The Bregman distance function (also called prox function) $D_i: X_i\times X_i \to \Real$ associated with $\omega_i$ is defined as follows:
\begin{align}\label{def:Di}
D_i(x,y) =\omega_i(y)-\omega_i(x)-\langle \nabla \omega_i(x),y-x \rangle \quad \hbox{for all } x,y \in X_i.
\end{align}
Next we define the prox mapping $P_i:X_i\times\Real^{ n_i}\to X_i$ as follows:
\fyR{\begin{align}
P_i(x,y)=\argmin_{z\in X_i}\{\langle y, z\rangle+D_i(x,z)\}, \quad \hbox{for all }x \in X_i, \ y \in \Real^{n_i}. 
\end{align}}
The next Lemma provides some of the main properties of Bregman functions and their associated prox mappings. The proof of these properties can be found in earlier work by Nemirovski et al., e.g., see Chapter 5 in \cite{Nemir2000}, and also \cite{nemirovski_robust_2009}.
\begin{lemma}[Properties of prox mappings]\label{lemma:proxprop}Let $i \in \{1,\ldots,d\}$ be given. The following hold:
\begin{itemize}
\item [(a)]  The Bregman distance function $D_i$ satisfies the following inequalities:
\begin{align}
\frac{\mu_{\omega_i}}{2} \|x-y\|_i^2\leq D_i(x,y) \leq \frac{L_{\omega_i}}{2}\|x-y\|_i^2 \quad \hbox{for all } x,y \in X_i.
\end{align}
\item [(b)] For any $x,z \in X_i$ and $y \in \Real^{n_i}$, we have  
\[D_i(P_i(x,y),z) \leq D_i(x,z)+\langle y,z-P_i(x,y)\rangle-D_i(x,P_i(x,y)).\]
\item [(c)] For any $x,z \in X_i$ and $y \in \Real^{n_i}$, we have  
\[D_i(P_i(x,y),z) \leq D_i(x,z)+\langle y,z-x\rangle+\frac{{\|y\|_{*i}}^2}{2\mu_{\omega_i}}.\]
\item [(d)] For any $x \in X_i$, we have $x=P_i(x,0)$.
\item [(e)] The mapping $P_i(x,y)$is Lipschitz continuous in $y$ with modulus 1, i.e., \[\|P_i(x,y) -P_i(x,z)\|_i \leq \|y-z\|_{*i} \qquad\hbox{for all }x \in X_i, \ y,z \in \Real^{n_i}.\]
\item [(f)] For any $x,y,z \in X_i$, we have $D_i(x,z)=D_i(x,y)+D_i(y,z)+\langle \nabla \omega_i(y)-\nabla \omega_i(x),z-y\rangle.$
\item [(g)] For any $x,z \in\Real^{n_i}$, we have $\nabla_z D_i(x,z) = \nabla \omega_i(z)-\nabla \omega_i(x)$.
\end{itemize}
\end{lemma}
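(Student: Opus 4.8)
The plan is to establish the seven properties in a dependency order that front-loads the purely algebraic facts and then extracts the prox-mapping estimates from a single variational characterization of the minimizer. I would begin with (g) and (f): differentiating the definition \eqref{def:Di} in its second argument immediately gives $\nabla_z D_i(x,z)=\nabla\omega_i(z)-\nabla\omega_i(x)$, and expanding the three Bregman terms in (f) through \eqref{def:Di} and cancelling the $\omega_i$ values and gradient contributions yields the cosine-type identity. Part (a) is equally direct: the strong convexity and Lipschitz-gradient inequalities for $\omega_i$ stated in Section \ref{sec:pre}, once the definition of $D_i$ is substituted, are precisely the bounds $\frac{\mu_{\omega_i}}{2}\|x-y\|_i^2\le D_i(x,y)\le\frac{L_{\omega_i}}{2}\|x-y\|_i^2$. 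Part (d) then follows because $P_i(x,0)=\argmin_{z\in X_i}D_i(x,z)$, and (a) gives $D_i(x,z)\ge\frac{\mu_{\omega_i}}{2}\|x-z\|_i^2>0$ for $z\neq x$ while $D_i(x,x)=0$, so the unique minimizer is $x$.

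The crux is the first-order optimality condition for the prox minimizer, from which both (b) and (e) flow. Since $z\mapsto\langle y,z\rangle+D_i(x,z)$ is strongly convex on the closed convex set $X_i$, the minimizer $u=P_i(x,y)$ exists, is unique, and satisfies $\langle y+\nabla_z D_i(x,u),\,w-u\rangle\ge0$ for all $w\in X_i$; by (g) this reads $\langle y+\nabla\omega_i(u)-\nabla\omega_i(x),\,w-u\rangle\ge0$. To obtain (b), I would take $w=z$ and substitute the identity (f) in the form $D_i(u,z)=D_i(x,z)-D_i(x,u)-\langle\nabla\omega_i(u)-\nabla\omega_i(x),z-u\rangle$; the optimality inequality yields $-\langle\nabla\omega_i(u)-\nabla\omega_i(x),z-u\rangle\le\langle y,z-u\rangle$, and combining the two gives the stated $D_i(P_i(x,y),z)\le D_i(x,z)+\langle y,z-P_i(x,y)\rangle-D_i(x,P_i(x,y))$.

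Part (c) follows from (b) by writing $\langle y,z-P_i(x,y)\rangle=\langle y,z-x\rangle+\langle y,x-P_i(x,y)\rangle$, lower-bounding $D_i(x,P_i(x,y))\ge\frac{\mu_{\omega_i}}{2}\|x-P_i(x,y)\|_i^2$ via (a), and applying the Fenchel--Young bound $\langle y,w\rangle-\frac{\mu_{\omega_i}}{2}\|w\|_i^2\le\frac{\|y\|_{*i}^2}{2\mu_{\omega_i}}$ with $w=x-P_i(x,y)$. For (e) I would write the optimality inequality at $u=P_i(x,y)$ tested with $w=v$ and at $v=P_i(x,z)$ tested with $w=u$, and add them; the $\nabla\omega_i(x)$ terms cancel, leaving $\langle\nabla\omega_i(u)-\nabla\omega_i(v),u-v\rangle\le\langle z-y,u-v\rangle$. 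Strong monotonicity of $\nabla\omega_i$ (equivalent to the strong convexity of $\omega_i$) bounds the left side below by $\mu_{\omega_i}\|u-v\|_i^2$, and Cauchy--Schwarz on the right gives $\mu_{\omega_i}\|u-v\|_i\le\|y-z\|_{*i}$, which is the claimed nonexpansiveness estimate (with the stated modulus $1$ corresponding to the normalization $\mu_{\omega_i}=1$).

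I expect the only genuinely delicate point to be the justification of the first-order optimality condition underlying (b) and (e): one must confirm that $X_i$ is closed and convex and that the strongly convex prox objective attains its minimum on $X_i$, so that the variational inequality characterization holds over the full constraint set rather than being merely an interior stationarity condition. Once that is in place, the remaining arguments are bookkeeping with the definition of $D_i$ and the two one-sided quadratic bounds from (a).
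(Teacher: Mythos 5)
The paper does not actually prove Lemma \ref{lemma:proxprop}; it defers to Nemirovski et al.\ (Chapter 5 of \cite{Nemir2000} and \cite{nemirovski_robust_2009}), so there is no in-paper argument to compare against. Your proposal is the standard self-contained derivation and it checks out: (g), (f), and (a) are immediate from the definition \eqref{def:Di} together with the two one-sided quadratic bounds on $\omega_i$; (d) follows from (a); the variational characterization $\langle y+\nabla\omega_i(u)-\nabla\omega_i(x),\,w-u\rangle\ge 0$ of $u=P_i(x,y)$ combined with the three-point identity (f) gives (b); and (c) follows from (b) by splitting $\langle y,z-u\rangle$ and applying Young's inequality against the lower bound from (a). The existence and uniqueness of the prox minimizer that you flag as the delicate point is indeed the only thing requiring justification, and it holds because $\omega_i$ is continuously differentiable and strongly convex and $X_i$ is nonempty, closed, and convex (Assumption \ref{assump:main}(a) even gives boundedness). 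One substantive remark: your argument for (e) correctly yields $\|P_i(x,y)-P_i(x,z)\|_i\le \mu_{\omega_i}^{-1}\|y-z\|_{*i}$, so the modulus-$1$ claim as stated implicitly assumes the usual normalization $\mu_{\omega_i}=1$; you are right to flag this, and it matters downstream, since the paper invokes the modulus-$1$ form in the proof of Lemma \ref{lemma:rec-bound} (the bound on Term 2), where the constant would otherwise acquire an extra factor of $\mu_{\omega_{i_k}}^{-1}$.
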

\fyR{In the analysis of the algorithms, we make use of} the following result, which can be found in~\cite{Polyak87} on page 50.
\begin{lemma}\label{lemma:supermartingale}
Let $v_k,$ $u_k,$ $\alpha_k,$ and  $\beta_k$ be
non-negative random variables, and let the
following relations hold almost surely: 
\[\EXP{v_{k+1}\mid {\tilde \sF_k}} 
\le (1+\alpha_k)v_k - u_k + \beta_k \quad\hbox{ for all } k,\qquad
\sum_{k=0}^\infty \alpha_k < \infty,\qquad
\sum_{k=0}^\infty \beta_k < \infty,\] 
where $\tilde \sF_k$ denotes the collection $v_0,\ldots,v_k$, $u_0,\ldots,u_k$,
$\alpha_0,\ldots,\alpha_k$, $\beta_0,\ldots,\beta_k$. 
Then, almost surely we have 
\[\lim_{k\to\infty}v_k = v, \qquad \sum_{k=0}^\infty u_k < \infty,\]
where $v \geq 0$ is some random variable.
\end{lemma}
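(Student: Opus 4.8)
The plan is to recognize this as a version of the Robbins--Siegmund almost-supermartingale theorem and to reduce it, via normalization and a stopping-time truncation, to the classical convergence theorem for nonnegative supermartingales. First I would remove the multiplicative factor $(1+\alpha_k)$. Setting $c_k=\prod_{i=0}^{k-1}(1+\alpha_i)$ (with $c_0=1$), the sequence $c_k$ is $\tilde\sF_{k-1}$-measurable and, since $\sum_k\alpha_k<\infty$ almost surely, converges to a finite, strictly positive limit $c_\infty$. Because $c_{k+1}=c_k(1+\alpha_k)$ is $\tilde\sF_k$-measurable, dividing the hypothesized recursion by $c_{k+1}$ yields
\[
\EXP{\frac{v_{k+1}}{c_{k+1}}\,\Big|\,\tilde\sF_k}\le \frac{v_k}{c_k}-\frac{u_k}{c_{k+1}}+\frac{\beta_k}{c_{k+1}},
\]
so after relabeling $v_k/c_k$, $u_k/c_{k+1}$, and $\beta_k/c_{k+1}$ I may assume $\alpha_k\equiv0$. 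Note $\beta_k/c_{k+1}\le\beta_k$ since $c_{k+1}\ge1$, so summability of the perturbation is preserved; and since $c_\infty$ is finite and positive, convergence of $v_k/c_k$ is equivalent to convergence of $v_k$, while $\sum_k u_k/c_{k+1}<\infty$ is equivalent to $\sum_k u_k<\infty$.

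With $\alpha_k\equiv0$, define $W_k=v_k+\sum_{i=0}^{k-1}u_i$ and $M_k=W_k-\sum_{i=0}^{k-1}\beta_i$. A direct computation using the recursion together with the $\tilde\sF_k$-measurability of $u_k$ and $\beta_k$ shows $\EXP{M_{k+1}\mid\tilde\sF_k}\le M_k$, so $M_k$ is a supermartingale. The difficulty, and the main obstacle of the proof, is that $M_k$ need not be bounded below and that the summability of $\beta_k$ holds only almost surely rather than uniformly, so the supermartingale convergence theorem does not apply directly. To neutralize this I would introduce, for each integer $m\ge1$, the stopping time $\tau_m=\inf\{k:\sum_{i=0}^{k}\beta_i>m\}$. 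On the event $\{k\le\tau_m\}$ one has $\sum_{i=0}^{k-1}\beta_i\le m$, hence $M_{k\wedge\tau_m}\ge W_{k\wedge\tau_m}-m\ge -m$ by nonnegativity of $v_k$ and $u_i$; thus $M_{k\wedge\tau_m}+m$ is a nonnegative supermartingale, and the classical convergence theorem gives almost sure convergence of $M_{k\wedge\tau_m}$ as $k\to\infty$.

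Finally I would patch the stopped limits together. Since $\sum_i\beta_i<\infty$ almost surely, $\tau_m\to\infty$ and $\Prob{\{\tau_m=\infty\}}\to1$, so the events $\{\tau_m=\infty\}$ increase to a set of full probability. On $\{\tau_m=\infty\}$ the stopped process coincides with $M_k$, whence $M_k$ converges almost surely there, and therefore almost surely on the whole space. As $\sum_{i=0}^{k-1}\beta_i$ converges, $W_k=M_k+\sum_{i=0}^{k-1}\beta_i$ converges almost surely as well. The bound $\sum_{i=0}^{k-1}u_i\le W_k$ then shows the nondecreasing partial sums of $u_i$ are bounded above, so $\sum_k u_k<\infty$ almost surely; and $v_k=W_k-\sum_{i=0}^{k-1}u_i$, being a difference of two convergent sequences, converges to a limit $v\ge0$. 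Undoing the normalization of the first step transfers both conclusions back to the original $v_k$ and $u_k$, completing the argument.
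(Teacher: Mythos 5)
Your proof is correct. Note, however, that the paper does not prove this lemma at all: it is quoted as a known result (the Robbins--Siegmund almost-supermartingale theorem) with a pointer to Polyak's book, so there is no in-paper argument to compare against. What you have written is the standard proof of that theorem, and every step checks out: the normalization $c_k=\prod_{i=0}^{k-1}(1+\alpha_i)$ correctly exploits $c_{k+1}=c_k(1+\alpha_k)$ and the $\tilde\sF_k$-measurability of $\alpha_k$ to reduce to the case $\alpha_k\equiv 0$ while preserving summability of the perturbation (since $c_{k+1}\ge 1$) and the equivalence of the conclusions (since $1\le c_k\le c_\infty<\infty$ a.s.); the process $M_k=v_k+\sum_{i<k}u_i-\sum_{i<k}\beta_i$ is indeed a supermartingale because $u_k$ and $\beta_k$ are $\tilde\sF_k$-measurable; and the localization by $\tau_m=\inf\{k:\sum_{i\le k}\beta_i>m\}$ is exactly the right device to convert the merely almost-sure summability of $\beta_k$ into a uniform lower bound $M_{k\wedge\tau_m}\ge -m$, after which the nonnegative-supermartingale convergence theorem and the exhaustion $\bigcup_m\{\tau_m=\infty\}$ of the probability space finish the argument. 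The only point worth flagging, and it is a technicality shared by most textbook treatments, is that invoking the supermartingale convergence theorem tacitly uses integrability of the stopped process (e.g.\ $\EXP{v_0}<\infty$), which the lemma as stated does not explicitly assume; this is harmless in the context in which the lemma is applied in the paper, where $v_k=\mathcal{L}(x_k,x^*)$ is uniformly bounded by compactness of $X$.
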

Next, we recall the following definitions (cf. \cite{facchinei02finite}) that will be referred to in our analysis.
\begin{definition}[Types of monotonicity]\label{def:map} Consider a mapping $F:X\to \Real^n$. 
\begin{itemize}
\item [(a)] $F$ is called a monotone mapping if for any $x,y \in X$, we have $\langle F(x)-F(y),x-y \rangle \geq 0$.
\item [(b)] $F$ is called a strictly monotone mapping if for any $x,y \in X$, we have $\langle F(x)-F(y),x-y \rangle > 0$.
\item [(c)] $F$ is called a $\mu$-strongly monotone mapping if there is $\mu>0$ such that for any $x,y \in X$, we have \[\langle F(x)-F(y),x-y \rangle \geq \mu\|x-y\|^2.\]
\item [(d)]  $F$ is called a pseudo-monotone mapping if for any \fyR{$x,y \in X$}, $\langle F(y),x-y \rangle \geq 0$ implies that \[\langle F(x),x-y \rangle \geq 0.\]
\item [(e)] $F$ is called a strictly pseudo-monotone mapping if for \fyR{any $x,y \in X$  and $x\neq y$}, $\langle F(y),x-y \rangle \geq 0$ implies that \[\langle F(x),x-y \rangle > 0.\]
\item [(f)] $F$ is called a $\mu$-strongly pseudo-monotone mapping if there is $\mu>0$ such that for any $x,y \in X$, $\langle F(y),x-y \rangle \geq 0$ implies that \[\langle F(x),x-y \rangle \geq \mu\|x-y\|^2.\]
\end{itemize}
\end{definition} 
It is worth noting that $(a) \Rightarrow (d)$, $(b) \Rightarrow (e)$, and $(c) \Rightarrow (f)$.

\section{Randomized block stochastic mirror-prox algorithm: assumptions and algorithm outline}\label{sec:alg}
In this section, we outline the randomized block stochastic mirror-prox (B-SMP) algorithm and state the main assumptions. Algorithm \ref{algorithm:IRLSA-impl} shows the steps of the B-SMP method. At iteration $k$, first a \fyR{realization
of} random variable $i_k$ is generated from the probability distribution $P_b$, where $Prob(i_k=i)=p_i$. Throughout the analysis, we assume $p_i>0$ for all $i=1,\ldots,d$ and $\sum_{i=1}^dp_i=1$. Step 4 in Algorithm \ref{algorithm:IRLSA-impl} provides the update rules. First, using a stochastic oracle, a \fyR{realization} of the stochastic mapping $F(\cdot,\xi)$ is generated at $x_k$, denoted by $F(x_k,\tilde \xi_k)$. Next, the $i_k$th block of the vector $y_{k}$ is updated using the projection given by \eqref{BSMPproj1}. Here $\g_k>0$ is the stepsize sequence. The stochastic oracle is called at the resulting vector $y_{k+1}$ and $F(y_{k+1},\xi_k)$ is generated. Using the block $F_{i_k}(y_{k+1},\xi_k)$, stepsize $\g_k$, and the value of $x_k^{i_k}$, the $i_k$th block of $x_k$ is updated using the rule \eqref{BSMPproj2}. We note that the B-SMP method extends the SMP algorithm proposed in \cite{Nem11} to a block coordinate variant. This results in reducing the computational complexity of each iteration substantially by only performing the two projection steps on a component set, versus two projections on the entire set $X \in \Real^n$. The B-SMP algorithm differs from the SBMD method developed in \cite{LanSBMD} mainly from two aspects: (i) While in step 4, we do two projections similar to the extragradient method, in SBMD, a single projection is performed; (ii) Unlike the SBMD algorithm, here we do not use averaging. Later, we also study the averaging variant of the B-SMP method for stochastic convex optimization problems (see Proposition \ref{prop:optaveSCO}). 
\begin{algorithm}
  \caption{Randomized block stochastic mirror-prox (B-SMP) algorithm}
\label{algorithm:IRLSA-impl}
    \begin{algorithmic}[1]
    \STATE \textbf{initialization:} Set $k=0$, a random initial point $x_0=y_0\in X$, a stepsize $\g_0>0$, a discrete probability distribution $P_b$ with probabilities $p_i$ for $i=1,\ldots,d$ and a scalar $K$; 
    \FOR {$k=0,1,\ldots,K-1$}
        \STATE Generate a \fyR{realization of} random variable $i_k$ using the probability distribution $P_b$ such that $Prob(i_k=i)=p_i$; \fyR{Generate $\xi_k$ and $\tilde \xi_k$ as realizations of the random vector $\xi$;}
         \STATE Update the blocks $x_k^{i_k}$ and $y_k^{i_k}$:
\begin{align}
y_{k+1}^{i}&:= \left\{\begin{array}{ll}P_{i_k}\left(x_{k}^{i_k},\g_k F_{i_k}(x_{k},\tilde \xi_k)\right)
&\hbox{if } i=i_k,\cr \hbox{} &\hbox{}\cr
x_k^{i}& \hbox{if } i\neq i_k,\end{array}\right.\label{BSMPproj1}
\\
x_{k+1}^{i}&:=\left\{\begin{array}{ll}P_{i_k}\left(x_{k}^{i_k},\g_k F_{i_k}(y_{k+1},\xi_k)\right)
&\hbox{if } i=i_k,\cr \hbox{} &\hbox{}\cr
x_k^{i}& \hbox{if } i\neq i_k.\end{array}\right.\label{BSMPproj2}
\end{align}
    \ENDFOR
    \STATE return $x_{K};$
   \end{algorithmic}
\end{algorithm}

Throughout, we let $\sF_{k}$ denote the history of the method up to the $k$th iteration, i.e., \[\sF_{k}\triangleq\{i_0,\tilde \xi_0,\xi_0,\ldots,i_{k},\tilde \xi_{k},\xi_{k}\}.\]
Recall $F(x)$ denotes the expected value of the stochastic mapping $F(x,\xi)$ where $\xi \in \Real^m$, i.e., $F(x)=\EXP{F(x,\xi)}$. We define the stochastic errors $\tilde w_k$ and $w_k$ as follows:
\begin{align}\label{def:wk}
\tilde w_k\triangleq F(x_k,\tilde \xi_k)-F(x_k),  \qquad w_k\triangleq F(y_{k+1},\xi_k)-F(y_{k+1}).
\end{align}
In the following, we state the main assumptions that will be considered in our analysis in the next sections.
\begin{assumption}[Problem]\label{assump:main}
\begin{itemize}
\item [(a)] For any $i \in \{1,\ldots,d\}$, the set $X_i$ is nonempty, closed, convex, and bounded, i.e., there exists $B_i \in \Real$ such that $\|x\|_{i} \leq B_i$ for all $x\in X_i$. 
\fyR{\item [(b)] For any $i$,  the mapping $F_i$ is block-Lipschitz continuous with parameter $L_i$, i.e., there exists $L_i \in \Real$ such that 
\[\|F_i(x^i;x^{-i})-F_i(y^i;x^{-i})\|_{*i}\leq L_i\|x^i-y^i\|_i, \quad \hbox{for all } x^i, y^i \in X_i \hbox{ and }x^{-i} \in \prod_{j=1,j\neq i}^d X_j.\]}
\end{itemize}
\end{assumption}
\fyR{\begin{remark}\label{rem1}
Note that the block-Lipschitzian property in Assumption \ref{assump:main}(b) implies continuity of the mapping $F$. Compactness, and convexity of the set $X$, along with continuity of the mapping $F$ imply that SOL$(X,F)$ is nonempty and compact (cf. Corollary 2.2.5 in~\cite{facchinei02finite}). Also, compactness of the sets $X_i$ in Assumption \ref{assump:main}(a) and continuity of mapping $F$ in Assumption \ref{assump:main}(b) imply boundedness of the mapping $F$. Therefore, for any $i$, the mapping $F_i$ is bounded on $X$, i.e., there exists $C_i \in \Real_+$ such that $\|F_i(x)\|_{*i}\leq C_i \hbox{ for all } x \in X.$ Throughout the paper, $C_i$ denotes the bound on $F_i(x)$ with respect to the norm $\|\cdot\|_{*i}$.
\end{remark}
}
\begin{assumption}{\bf{(Random variables)}}\label{assump:randvar}
\begin{itemize}
\item [(a)] The random variables $\xi_k$ and $\tilde \xi_k$, and $i_k$ are all i.i.d. and independent of each other.
\item [(b)] \fyR{For all $k\geq 0$, the stochastic mappings $F(\cdot,\tilde \xi_k)$ and $F(\cdot,\xi_k)$ are unbiased estimators of the mapping $F(\cdot)$, i.e., for all $x \in X$}
\[\fyR{\EXP{F(x,\tilde \xi_k)\mid x}=\EXP{F(x,\xi_k)\mid x}=F(x).}\]
\item [(c)] \fyR{The stochastic mappings $F(\cdot,\tilde \xi_k)$ and $F(\cdot,\xi_k)$ both have bounded variances. More precisely, for all $i$ and $k\geq 0$, there exist positive scalars $\nu_i$ and $\tilde \nu_i$ such that}
\[\fyR{\EXP{\|F_i(x,\tilde \xi_k)-F_i(x)\|_{*i}^2\mid x}\leq {\tilde \nu_i}^2, \quad \EXP{\|F_i(x,\xi_k)-F_i(x)\|_{*i}^2\mid x}\leq {\nu_i}^2.}\]
\item [(d)] For all $k\geq 0$, the random variable $i_k$ is drawn from a discrete probability distribution $P_b$ \fyR{where $Prob(i_k=i)=p_i$ and $p_i>0$.}
\end{itemize}
\end{assumption}
\fyR{\begin{remark}
Note that $x_k$ is $\sF_{k-1}$-measurable, and $y_{k+1}$ is $\sF_{k-1}\cup \{\tilde \xi_k, i_k\}$-measurable. Therefore, from the definition of the stochastic errors $\fyR{\tilde w_k}$ and $w_k$ in \eqref{def:wk}, Assumption \ref{assump:randvar}(b) implies that for all $i=1,\ldots,d$, we have for all $k\geq 0$
\[\EXP{{\fyR{\tilde w_k}}^i\mid \sF_{k-1}}=0, \quad \EXP{{w_k}^i\mid \sF_{k-1}\cup \{\tilde \xi_k,i_k\}}=0.\]
Similarly, from Assumption \ref{assump:randvar}(c) we obtain for all $i=1,\ldots,d$, and for all $k \geq 0$
\[ \EXP{\|{\fyR{\tilde w_k}}^i\|_{*i}^2\mid \sF_{k-1}}\leq {\fyR{\tilde \nu}^2_i}, \quad \EXP{\|w_k^i\|_{*i}^2\mid \sF_{k-1}\cup \{\tilde \xi_k,i_k\}}\leq \nu_i^2.\]
\end{remark}}
\begin{assumption}{\bf{(Stepsize sequence)}}\label{assump:stepsize} Let the \fyR{positive} stepsize sequence $\{\g_k\}$ satisfy the following:
\begin{itemize}
\item [(a)] $\g_k$ is square summable, i.e., $\sum_{k=0}^\infty \g_k^2 < \infty$.
\item [(b)] $\g_k$ is non-summable, i.e., $\sum_{k=0}^\infty \g_k = \infty$.
\end{itemize}
\end{assumption}
\section{Convergence analysis of the B-SMP algortihm}\label{sec:conv}
In this section, we establish the convergence of the B-SMP algorithm in an almost sure sense. 
Throughout the analysis, we make use of a Lyapunov function $\mathcal{L}:X\times \Real^n\to \Real$ given by \begin{align}\label{def:L}\mathcal{L}(x,y)\triangleq\sum_{i=1}^dp_i^{-1}D_i(x^i,y^i) \qquad \hbox{for any }x \in X \hbox{ and } y \in \Real^n.\end{align} 
The following result will be used in our analysis.
\begin{lemma}[Properties of $\mathcal{L}$]\label{lemma:lyapProp}
\fyR{Let Assumption \ref{assump:randvar}(d) hold.} Function $\mathcal{L}$ satisfies the following:
\begin{itemize}
\item [(a)] $\mathcal{L}(x,y)\geq 0$ for any $x \in X$ and $y \in \Real^n$.
\item [(b)]$\mathcal{L}(x,y)=0$ holds if and only if $x=y$.
\end{itemize}
\end{lemma}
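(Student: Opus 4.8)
The plan is to read off both parts directly from the two-sided bound in Lemma~\ref{lemma:proxprop}(a) together with the strict positivity of the weights. Since $\mathcal{L}(x,y)=\sum_{i=1}^d p_i^{-1}D_i(x^i,y^i)$ is a nonnegatively weighted sum, the first thing I would record is that Assumption~\ref{assump:randvar}(d) guarantees $p_i>0$, hence $p_i^{-1}>0$, for every $i$. Thus $\mathcal{L}$ inherits the sign behaviour of its summands $D_i(x^i,y^i)$, and everything reduces to properties of the individual Bregman distances.

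For part (a), I would invoke the lower estimate of Lemma~\ref{lemma:proxprop}(a), namely $D_i(x^i,y^i)\ge \tfrac{\mu_{\omega_i}}{2}\|x^i-y^i\|_i^2\ge 0$, which holds because each $\omega_i$ is strongly convex with modulus $\mu_{\omega_i}>0$. Multiplying by $p_i^{-1}>0$ and summing over $i=1,\ldots,d$ immediately yields $\mathcal{L}(x,y)\ge 0$, which is the claim.

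For part (b), the ``if'' direction is a one-line computation: substituting $y^i=x^i$ into the definition \eqref{def:Di} gives $D_i(x^i,x^i)=\omega_i(x^i)-\omega_i(x^i)-\langle\nabla\omega_i(x^i),0\rangle=0$ for each $i$, so $\mathcal{L}(x,x)=0$. For the ``only if'' direction I would use that $\mathcal{L}(x,y)$ is a sum of nonnegative terms (by part (a)); a vanishing sum of nonnegative reals forces each term to vanish, so $p_i^{-1}D_i(x^i,y^i)=0$ and hence $D_i(x^i,y^i)=0$ for every $i$. Feeding this back into the lower bound of Lemma~\ref{lemma:proxprop}(a) gives $\tfrac{\mu_{\omega_i}}{2}\|x^i-y^i\|_i^2\le 0$, whence $\|x^i-y^i\|_i=0$, i.e.\ $x^i=y^i$. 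Concatenating over the blocks $i=1,\ldots,d$ then yields $x=y$.

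There is no genuine obstacle here: the result is a direct structural consequence of the strong convexity of the distance generating functions and the positivity of the sampling probabilities. The only points worth care are bookkeeping ones — keeping the block index $i$ aligned with its weight $p_i^{-1}$, and observing that for part (b) the identity is invoked for those $y\in\Real^n$ whose blocks $y^i$ lie in $X_i$, so that each $D_i(x^i,y^i)$ is well defined in the first place.
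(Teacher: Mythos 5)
Your proposal is correct and follows essentially the same route as the paper: part (a) from the lower bound in Lemma~\ref{lemma:proxprop}(a) together with $p_i^{-1}>0$, and part (b) by checking $D_i(x^i,x^i)=0$ for the ``if'' direction and, for the ``only if'' direction, forcing each nonnegative summand to vanish and then using the strong-convexity lower bound to conclude $x^i=y^i$ blockwise. The only difference is presentational; your explicit remark that the identity $D_i(x^i,x^i)=0$ is invoked for $y$ with blocks in $X_i$ is a harmless extra precaution not spelled out in the paper.
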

\begin{proof}
\noindent (a) This is an immediate consequence of Lemma \ref{lemma:proxprop}(a).\\
\noindent (b) Note that from the definition of $D_i$, for any $i$, we have $D_i(x^i,x^i)=0$ for any $x^i \in X_i$. This implies that $\mathcal{L}(x,x)=0$. Let us assume $\mathcal{L}(x,y)=0$ for some $x \in X$ and $y \in \Real^n$. From non-negativity of $D_i$, and the definition of $\mathcal{L}$, for any $i=1,\ldots, d$ we have $D_i(x^i,y^i)=0$. Invoking Lemma \ref{lemma:proxprop}(a) again, we have $\|x^i-y^i\|_i=0$. Since $\|\cdot\|_i$ is a norm operator, we have $x^i=y^i$ for all $i=1,\ldots, d$, implying that $x=y$.
\end{proof}
The first result stated below, provides a recursive relation in terms of the expected value of the Lyapunov function. This result will be used in both convergence analysis in this section and deriving the rate statements in the subsequent section.
\begin{lemma}[A recursive relation for the Lyapunov function]\label{lemma:rec-bound}
Let Assumptions~\ref{assump:main} and~\ref{assump:randvar} hold.
Then, for any $k\geq 0$, for the iterate $x_k$ generated by Algorithm \ref{algorithm:IRLSA-impl},
the following relation holds for all $x \in X$:
 \begin{align}\label{ineq:rec-bound}
\EXP{\mathcal{L}(x_{k+1},x)\mid \sF_{k-1}}&\leq \mathcal{L}(x_k,x)+\g_k\langle F(x_k), x-x_k\rangle+\sum_{i=1}^d\left(\frac{C_{i}^2+\nu_i^2}{\mu_{\omega_{i}}}+2L_{i}B_{i}(C_{i}+\tilde \nu_i)\right)\g_k^2.
\end{align}
\end{lemma}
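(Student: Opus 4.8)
The plan is to exploit that Algorithm~\ref{algorithm:IRLSA-impl} updates only the single block $i_k$ per iteration. Since $x_{k+1}$ and $x_k$ agree on every block except $i_k$, all but one summand in \eqref{def:L} cancel, giving
\[\mathcal{L}(x_{k+1},x)-\mathcal{L}(x_k,x)=p_{i_k}^{-1}\left(D_{i_k}(x_{k+1}^{i_k},x^{i_k})-D_{i_k}(x_k^{i_k},x^{i_k})\right).\]
Applying Lemma~\ref{lemma:proxprop}(b) to the second projection \eqref{BSMPproj2} with $z=x^{i_k}$ yields
\[D_{i_k}(x_{k+1}^{i_k},x^{i_k})-D_{i_k}(x_k^{i_k},x^{i_k})\leq \g_k\langle F_{i_k}(y_{k+1},\xi_k),\,x^{i_k}-x_{k+1}^{i_k}\rangle-D_{i_k}(x_k^{i_k},x_{k+1}^{i_k}).\]
I would then split the inner product as $x^{i_k}-x_{k+1}^{i_k}=(x^{i_k}-x_k^{i_k})+(x_k^{i_k}-x_{k+1}^{i_k})$ and, on the first part, write $F_{i_k}(y_{k+1},\xi_k)=F_{i_k}(x_k)+\big(F_{i_k}(y_{k+1})-F_{i_k}(x_k)\big)+w_k^{i_k}$, isolating the exact term $\g_k\langle F_{i_k}(x_k),x^{i_k}-x_k^{i_k}\rangle$ that reproduces the first-order term after averaging over $i_k$.

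For the part involving $x_k^{i_k}-x_{k+1}^{i_k}$, I would combine it with the residual $-D_{i_k}(x_k^{i_k},x_{k+1}^{i_k})$: by Cauchy--Schwarz, Young's inequality, and the lower bound $D_{i_k}(x_k^{i_k},x_{k+1}^{i_k})\geq\tfrac{\mu_{\omega_{i_k}}}{2}\|x_k^{i_k}-x_{k+1}^{i_k}\|_{i_k}^2$ from Lemma~\ref{lemma:proxprop}(a), the quadratic-in-$\|x_k^{i_k}-x_{k+1}^{i_k}\|$ terms cancel, leaving $\tfrac{\g_k^2}{2\mu_{\omega_{i_k}}}\|F_{i_k}(y_{k+1},\xi_k)\|_{*i_k}^2$. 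Using $\|a+b\|_{*i}^2\leq 2\|a\|_{*i}^2+2\|b\|_{*i}^2$ with $F_{i_k}(y_{k+1},\xi_k)=F_{i_k}(y_{k+1})+w_k^{i_k}$, the boundedness $\|F_{i_k}(y_{k+1})\|_{*i_k}\leq C_{i_k}$ (Remark~\ref{rem1}), and the variance bound $\EXP{\|w_k^{i_k}\|_{*i_k}^2\mid\sF_{k-1}\cup\{\tilde\xi_k,i_k\}}\leq\nu_{i_k}^2$, this term is bounded in conditional expectation by $\tfrac{C_{i_k}^2+\nu_{i_k}^2}{\mu_{\omega_{i_k}}}\g_k^2$, which matches the first summand of the claimed bound.

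For the cross term $\g_k\langle F_{i_k}(y_{k+1})-F_{i_k}(x_k),x^{i_k}-x_k^{i_k}\rangle$, the crucial observation is that $y_{k+1}$ and $x_k$ agree on every block except $i_k$, so the block-Lipschitz property (Assumption~\ref{assump:main}(b)) applies directly: $\|F_{i_k}(y_{k+1})-F_{i_k}(x_k)\|_{*i_k}\leq L_{i_k}\|y_{k+1}^{i_k}-x_k^{i_k}\|_{i_k}$. Since $y_{k+1}^{i_k}=P_{i_k}(x_k^{i_k},\g_k F_{i_k}(x_k,\tilde\xi_k))$ while $x_k^{i_k}=P_{i_k}(x_k^{i_k},0)$ by Lemma~\ref{lemma:proxprop}(d), the $1$-Lipschitz property of Lemma~\ref{lemma:proxprop}(e) gives $\|y_{k+1}^{i_k}-x_k^{i_k}\|_{i_k}\leq\g_k\|F_{i_k}(x_k,\tilde\xi_k)\|_{*i_k}$; combining this with $\|x^{i_k}-x_k^{i_k}\|_{i_k}\leq 2B_{i_k}$ (Assumption~\ref{assump:main}(a)) and $\EXP{\|F_{i_k}(x_k,\tilde\xi_k)\|_{*i_k}\mid\sF_{k-1}}\leq C_{i_k}+\tilde\nu_{i_k}$ (Jensen plus the variance bound) reproduces the $2L_{i_k}B_{i_k}(C_{i_k}+\tilde\nu_{i_k})\g_k^2$ summand. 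The linear noise term $\g_k\langle w_k^{i_k},x^{i_k}-x_k^{i_k}\rangle$ vanishes in expectation, since $\EXP{w_k^{i_k}\mid\sF_{k-1}\cup\{\tilde\xi_k,i_k\}}=0$ while $x^{i_k}-x_k^{i_k}$ is measurable with respect to that $\sigma$-field.

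Finally, I would assemble these bounds and take the conditional expectation given $\sF_{k-1}$ in two nested stages: first over $\xi_k$ (holding $i_k,\tilde\xi_k$ fixed) to annihilate the noise term and bound the variance contributions, then over $(\tilde\xi_k,i_k)$ using their mutual independence. The weight $p_{i_k}^{-1}$ in $\mathcal{L}$ is designed precisely so that $\EXP{p_{i_k}^{-1}\langle F_{i_k}(x_k),x^{i_k}-x_k^{i_k}\rangle\mid\sF_{k-1}}=\sum_{i=1}^d\langle F_i(x_k),x^i-x_k^i\rangle=\langle F(x_k),x-x_k\rangle$, and so that each $p_i^{-1}$ cancels against $\mathrm{Prob}(i_k=i)=p_i$, turning the per-block error estimates into the stated sum $\sum_{i=1}^d$. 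I expect the main obstacle to be the careful bookkeeping of the nested conditioning: because the same index $i_k$ and the intermediate point $y_{k+1}$ enter both projection steps, $y_{k+1}$ is only $\sF_{k-1}\cup\{\tilde\xi_k,i_k\}$-measurable, so the zero-mean and variance arguments for $w_k$ must be performed at the inner stage, while the block-Lipschitz bound on $\|y_{k+1}^{i_k}-x_k^{i_k}\|$ and the cancellation of the $\|x_k^{i_k}-x_{k+1}^{i_k}\|^2$ terms must be arranged deterministically before any expectation is taken.
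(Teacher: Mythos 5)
Your proposal is correct and follows the paper's own proof essentially step for step: the same decomposition $F_{i_k}(y_{k+1},\xi_k)=F_{i_k}(x_k)+\big(F_{i_k}(y_{k+1})-F_{i_k}(x_k)\big)+w_k^{i_k}$, the same block-Lipschitz plus prox-nonexpansiveness bound on the cross term yielding $2L_{i_k}B_{i_k}(C_{i_k}+\tilde\nu_{i_k})\g_k^2$, and the same nested conditioning (first over $\xi_k$, then $\tilde\xi_k$, then $i_k$) with the $p_{i_k}^{-1}$ weights cancelling against the sampling probabilities. The only cosmetic difference is that you re-derive the one-step prox estimate from Lemma~\ref{lemma:proxprop}(b) via Young's inequality and the strong-convexity lower bound on $D_{i_k}$, whereas the paper invokes Lemma~\ref{lemma:proxprox}(c)\footnote{That is, Lemma~\ref{lemma:proxprop}(c).} directly, which encodes exactly that computation.
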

\begin{proof}
Consider the relation $x_{k+1}^{i_k}=P_{i_k}\left(x_{k}^{i_k},\g_kF_{i_k}(y_{k+1},\xi_{k})\right)$. From Lemma \ref{lemma:proxprop}(c), for an arbitrary vector $x \in X$, we obtain 
\begin{align}\label{ineq:Di}
D_{i_k}(x_{k+1}^{i_k},x^{i_k})& \leq D_{i_k}(x_{k}^{i_k},x^{i_k})+\g_k\langle F_{i_k}(y_{k+1},\xi_k), x^{i_k}-x_k^{i_k}\rangle+\g_k^2\frac{\|F_{i_k}(y_{k+1},\xi_k)\|_{*i_k}^2}{2\mu_{\omega_{i_k}}}\notag\\
& \leq D_{i_k}(x_{k}^{i_k},x^{i_k})+\underbrace{\g_k\langle F_{i_k}(y_{k+1}), x^{i_k}-x_k^{i_k}\rangle}_{\mbox{Term}1}+\g_k\langle w_k^{i_k}, x^{i_k}-x_k^{i_k}\rangle\notag\\
&+\g_k^2\frac{\|F_{i_k}(y_{k+1})\|_{*i_k}^2}{\mu_{\omega_{i_k}}}+\g_k^2\frac{\|w_k^{i_k}\|_{*i_k}^2}{\mu_{\omega_{i_k}}},
\end{align}
where we used the triangle inequality for the dual norm $\|\cdot\|_{*i_k}$ in the preceding inequality. Next, we provide an upper bound for Term $1$. Adding and subtracting $F_{i_k}(x_k)$ and using the Lipschitzian property of the mapping $F_{i_k}$, we obtain
\begin{align}\label{ineq:Term1}
\mbox{Term}\ 1&= \g_k\langle F_{i_k}(y_{k+1})-F_{i_k}(x_k), x^{i_k}-x_k^{i_k}\rangle+\g_k\langle F_{i_k}(x_k), x^{i_k}-x_k^{i_k}\rangle\notag\\
& \leq \g_k\|F_{i_k}(y_{k+1})-F_{i_k}(x_k)\|_{*i_k}\|x^{i_k}-x_k^{i_k}\|_{i_k}+\g_k\langle F_{i_k}(x_k), x^{i_k}-x_k^{i_k}\rangle\notag\\
& \leq 2B_{i_k}\g_k\underbrace{\|F_{i_k}(y_{k+1})-F_{i_k}(x_k)\|_{*i_k}}_{\mbox{Term}\ 2}+\g_k\langle F_{i_k}(x_k), x^{i_k}-x_k^{i_k}\rangle,
\end{align}
where in the first inequality, we use the definition of the dual norm $\|\cdot\|_{*i}$, while in the preceding relation, we use the triangle inequality for the term $\|x^{i_k}-x_k^{i_k}\|_{i_k}$ and boundedness of the set $X_{i_k}$. Next, we estimate an upper bound on $\mbox{Term}\ 2$. We have 
\begin{align*}
\mbox{Term}\ 2&= \|F_{i_k}(y_{k+1})-F_{i_k}(x_k)\|_{*i_k}=\|F_{i_k}(y_{k+1}^{i_k};x_{k}^{-i_k})-F_{i_k}(x_{k}^{i_k};x_{k}^{-i_k})\|_{*i_k}\leq L_{i_k}\|y_{k+1}^{i_k}-x_{k}^{i_k}\|_{i_k},
\end{align*}
where we use the block-Lipschitzian property of mapping $F$ given by Assumption \ref{assump:main}(b). From Lemma \ref{lemma:proxprop}(d,e) and the update rule for $y_{k+1}$, we obtain
\begin{align*}
\mbox{Term}\ 2& \leq L_{i_k}\|y_{k+1}^{i_k}-x_{k}^{i_k}\|_{i_k}=L_{i_k}\left\|P_{i_k}\left(x_{k}^{i_k},\g_k F_{i_k}(x_{k},\tilde \xi_k)\right)-P_{i_k}\left(x_{k}^{i_k},0\right)\right\|_{i_k} \leq L_{i_k}\|\g_k F_{i_k}(x_{k},\tilde \xi_k)\|_{*i_k}\\
& =\g_kL_{i_k}\| F_{i_k}(x_{k})+\fyR{{\tilde w}^{i_k}_k}\|_{*i_k}\leq \g_kL_{i_k}\| F_{i_k}(x_{k})\|_{*i_k}+\g_kL_{i_k}\|\fyR{{\tilde w}^{i_k}_k}\|_{*i_k}.
\end{align*}
Therefore, from \eqref{ineq:Term1} and Assumption \ref{assump:main}(c), we obtain
\begin{align*}
\mbox{Term}\ 1& \leq  2L_{i_k}B_{i_k}\g_k^2\| F_{i_k}(x_{k})\|_{*i_k}+2L_{i_k}B_{i_k}\g_k^2\|\fyR{{\tilde w}^{i_k}_k}\|_{*i_k}+\g_k\langle F_{i_k}(x_k), x^{i_k}-x_k^{i_k}\rangle \\
& \leq 2L_{i_k}B_{i_k}C_{i_k}\g_k^2+2L_{i_k}B_{i_k}\g_k^2\|\fyR{{\tilde w}^{i_k}_k}\|_{*i_k}+\g_k\langle F_{i_k}(x_k), x^{i_k}-x_k^{i_k}\rangle.
\end{align*}
From the preceding inequality, \eqref{ineq:Di}, and Assumption \ref{assump:main}(c), we have 
\begin{align*}
D_{i_k}(x_{k+1}^{i_k},x^{i_k})& \leq D_{i_k}(x_{k}^{i_k},x^{i_k})+\g_k\langle F_{i_k}(x_k), x^{i_k}-x_k^{i_k}\rangle+\g_k\langle w_k^{i_k}, x^{i_k}-x_k^{i_k}\rangle\notag\\
&+\g_k^2\left(\frac{C_{i_k}^2}{\mu_{\omega_{i_k}}}+2L_{i_k}B_{i_k}C_{i_k}\right)+\g_k^2\frac{\|w_k^{i_k}\|_{*i_k}^2}{\mu_{\omega_{i_k}}}+2L_{i_k}B_{i_k}\g_k^2\|\fyR{{\tilde w}^{i_k}_k}\|_{*i_k}.
\end{align*}
Note that by the definition of Algorithm \ref{algorithm:IRLSA-impl}, for any $i\neq i_k$, we have $x_{k+1}^i=x_k^i$. Thus, from the definition of $\mathcal{L}$ given by \eqref{def:L}, and the preceding relation we have
\begin{align*}
\mathcal{L}(x_{k+1},x)&=\sum_{i\neq i_k}p_i^{-1}D_i(x_{k+1}^i,x^i)+p_{i_k}^{-1}D_{i_k}(x_{k+1}^{i_k},x^{i_k})\\
&=\sum_{i\neq i_k}p_i^{-1}D_i(x_{k}^i,{x}^i)+p_{i_k}^{-1}D_{i_k}(x_{k+1}^{i_k},{x}^{i_k})\\
& \leq \mathcal{L}(x_k,x)+p_{i_k}^{-1}\g_k\langle F_{i_k}(x_k), x^{i_k}-x_k^{i_k}\rangle+p_{i_k}^{-1}\g_k\langle w_k^{i_k}, x^{i_k}-x_k^{i_k}\rangle\\
&+p_{i_k}^{-1}\g_k^2\left(\frac{C_{i_k}^2}{\mu_{\omega_{i_k}}}+2L_{i_k}B_{i_k}C_{i_k}\right)+p_{i_k}^{-1}\g_k^2\frac{\|w_k^{i_k}\|_{*i_k}^2}{\mu_{\omega_{i_k}}}+2p_{i_k}^{-1}L_{i_k}B_{i_k}\g_k^2\|\fyR{{\tilde w}^{i_k}_k}\|_{*i_k}.
\end{align*}
\fyR{Next, we take conditional expectations from the preceding relation on $\sF_{k-1}\cup \{\tilde \xi_k, i_k\}$. Note that in Algorithm~\ref{algorithm:IRLSA-impl}, $x_k$ is $\sF_{k-1}$-measurable, and $y_{k+1}$ and $\fyR{\tilde w_k}$ are both $\sF_{k-1}\cup \{\tilde \xi_k, i_k\}$-measurable. Therefore, we obtain
\begin{align*}
\EXP{\mathcal{L}(x_{k+1},x)\mid \sF_{k-1}\cup \{\tilde \xi_k, i_k\}}&\leq \mathcal{L}(x_k,x)+p_{i_k}^{-1}\g_k\langle F_{i_k}(x_k), x^{i_k}-x_k^{i_k}\rangle\\
&+\EXP{p_{i_k}^{-1}\g_k\langle w_k^{i_k}, x^{i_k}-x_k^{i_k}\rangle\mid \sF_{k-1}\cup \{\tilde \xi_k, i_k\}}\\& +p_{i_k}^{-1}\g_k^2\left(\frac{C_{i_k}^2}{\mu_{\omega_{i_k}}}+2L_{i_k}B_{i_k}C_{i_k}\right)+\EXP{p_{i_k}^{-1}\g_k^2\frac{\|w_k^{i_k}\|_{*i_k}^2}{\mu_{\omega_{i_k}}}\mid \sF_{k-1}\cup \{\tilde \xi_k, i_k\}}\\& +2p_{i_k}^{-1}L_{i_k}B_{i_k}\g_k^2\|\fyR{{\tilde w}^{i_k}_k}\|_{*i_k}.
\end{align*}
Rearranging the terms, we obtain 
\begin{align*}
\EXP{\mathcal{L}(x_{k+1},x)\mid \sF_{k-1}\cup \{\tilde \xi_k, i_k\}}&\leq \mathcal{L}(x_k,x)+p_{i_k}^{-1}\g_k\langle F_{i_k}(x_k), x^{i_k}-x_k^{i_k}\rangle\\
&+p_{i_k}^{-1}\g_k\langle \EXP{w_k^{i_k}\mid \sF_{k-1}\cup \{\tilde \xi_k, i_k\}}, x^{i_k}-x_k^{i_k}\rangle\\& +p_{i_k}^{-1}\g_k^2\left(\frac{C_{i_k}^2}{\mu_{\omega_{i_k}}}+2L_{i_k}B_{i_k}C_{i_k}\right)+p_{i_k}^{-1}\g_k^2\frac{\EXP{\|w_k^{i_k}\|_{*i_k}^2\mid \sF_{k-1}\cup \{\tilde \xi_k, i_k\}}}{\mu_{\omega_{i_k}}}\\& +2p_{i_k}^{-1}L_{i_k}B_{i_k}\g_k^2\|\fyR{{\tilde w}^{i_k}_k}\|_{*i_k}.
\end{align*}
Invoking Assumption \ref{assump:randvar}, we can write
\begin{align}\label{ineq:rev1ineq1}
\EXP{\mathcal{L}(x_{k+1},x)\mid \sF_{k-1}\cup \{\tilde \xi_k, i_k\}}&\leq \mathcal{L}(x_k,x)+p_{i_k}^{-1}\g_k\langle F_{i_k}(x_k), x^{i_k}-x_k^{i_k}\rangle\notag\\
& +p_{i_k}^{-1}\g_k^2\left(\frac{C_{i_k}^2}{\mu_{\omega_{i_k}}}+2L_{i_k}B_{i_k}C_{i_k}\right)+p_{i_k}^{-1}\g_k^2\frac{\nu_{i_k}^2}{\mu_{\omega_{i_k}}}\notag\\& +\underbrace{2p_{i_k}^{-1}L_{i_k}B_{i_k}\g_k^2\|\fyR{{\tilde w}^{i_k}_k}\|_{*i_k}}_{\mbox{Term}\ 3}.
\end{align}
In the remainder of the proof, the idea is to first take conditional expectations from the preceding inequality with respect to $\tilde \xi_k$, and then with respect to $i_k$. Note that for Term $3$, we can write
\begin{align*}
\EXP{\mbox{Term}\ 3\mid \sF_{k-1}\cup \{i_k\}}&\leq 2\g_k^2L_{i_k}B_{i_k}\EXP{\|\fyR{{\tilde w}^{i_k}_k}\|_{*i_k}\mid \sF_{k-1}\cup \{i_k\}}
\\ &\leq 2\g_k^2L_{i_k}B_{i_k}\sqrt{\EXP{\|{\fyR{\tilde w_k}}^{i_k}\|^2_{*{i_k}}\mid \sF_{k-1}\cup \{i_k\}}}\leq2\g_k^2L_{i_k}B_{i_k}\fyR{{\tilde\nu}_{i_k}},\end{align*}
where in the second inequality, we applied Jensen's inequality, and in the last inequality we used Assumption \ref{assump:randvar}. 
Taking expectations in relation \eqref{ineq:rev1ineq1} with respect to $\tilde \xi_k$, and using the preceding estimate, we obtain
\begin{align*}
\EXP{\mathcal{L}(x_{k+1},x)\mid \sF_{k-1}\cup \{i_k\}}&\leq \mathcal{L}(x_k,x)+p_{i_k}^{-1}\g_k\langle F_{i_k}(x_k), x^{i_k}-x_k^{i_k}\rangle\\
& +p_{i_k}^{-1}\g_k^2\left(\frac{C_{i_k}^2}{\mu_{\omega_{i_k}}}+2L_{i_k}B_{i_k}C_{i_k}\right)+p_{i_k}^{-1}\g_k^2\frac{\nu_{i_k}^2}{\mu_{\omega_{i_k}}}+2\g_k^2L_{i_k}B_{i_k}\fyR{{\tilde\nu}_{i_k}}.
\end{align*}
Taking expectations in the preceding inequality with respect $i_k$, we have 
\begin{align}\label{ineq2:L}
\EXP{\mathcal{L}(x_{k+1},x)\mid \sF_{k-1}}&\leq \mathcal{L}(x_k,x)+\sum_{i=1}^dp_{i}p_{i}^{-1}\g_k\langle F_{i}(x_k), x^{i}-x_k^{i}\rangle\notag\\
&+\sum_{i=1}^dp_{i}p_{i}^{-1}\g_k^2\left(\frac{C_{i}^2}{\mu_{\omega_{i}}}+2L_{i}B_{i}C_{i}\right)+\sum_{i=1}^dp_{i}p_{i}^{-1}\g_k^2\frac{\nu_{i}^2}{\mu_{\omega_{i}}}\notag\\ &+2\sum_{i=1}^dp_{i}p_{i}^{-1}L_{i}B_{i}\g_k^2\fyR{{\tilde \nu}_{i}}.
\end{align}
From the definition of the inner product, by rearranging the terms, we obtain the desired inequality.}
\end{proof}
Before we proceed to establish the almost sure convergence for Algorithm \ref{algorithm:IRLSA-impl}, in the following result, we present the properties of Algorithm \ref{algorithm:IRLSA-impl} when the mapping $F$ is pseudo-monotone.
\begin{lemma}[Properties under pseudo-monotonicity]\label{lemma:pseudo}
Consider problem \eqref{eqn:SCVI} where the mapping $F$ is pseudo-monotone on $X$. Let Assumptions \ref{assump:main}, \ref{assump:randvar}, and \ref{assump:stepsize} hold. Let $\{x_k\}$ be generated by Algorithm \ref{algorithm:IRLSA-impl}. Then, the following results hold almost surely:
\begin{itemize}
\item [(a)] For any $x^* \in \hbox{SOL}(X,F)$, the sequence $\{\mathcal{L}(x_k,x^*)\}$ is convergent.
\item [(b)] The sequence $\{x_k\}$ is bounded.
\item [(c)] For any $x^* \in \hbox{SOL}(X,F)$, $\liminf_{k \to \infty} \langle F(x_k), x^*-x_k\rangle=0$.
\item [(d)] If an accumulation point of $\{x_k\}$ is a solution to problem \eqref{eqn:SCVI}, then the \fyR{entire} sequence $\{x_k\}$ converges to this solution.
\end{itemize}
\end{lemma}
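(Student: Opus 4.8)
The backbone of all four parts is the recursion of Lemma~\ref{lemma:rec-bound}, specialized to $x=x^*\in\mathrm{SOL}(X,F)$ and turned into a supermartingale relation via pseudo-monotonicity, after which Lemma~\ref{lemma:supermartingale} does the heavy lifting. The plan is as follows. First I would use that $x^*$ solves \eqref{eqn:SCVI} to get $\langle F(x^*),x_k-x^*\rangle\geq 0$; pseudo-monotonicity of $F$ (Definition~\ref{def:map}(d)) then gives $\langle F(x_k),x_k-x^*\rangle\geq 0$, i.e. $\langle F(x_k),x^*-x_k\rangle\leq 0$. Substituting this into Lemma~\ref{lemma:rec-bound} eliminates the first-order term in the right sign and yields, for $M\triangleq\sum_{i=1}^d\left(\frac{C_i^2+\nu_i^2}{\mu_{\omega_i}}+2L_iB_i(C_i+\tilde\nu_i)\right)<\infty$,
\begin{align*}
\EXP{\mathcal{L}(x_{k+1},x^*)\mid \sF_{k-1}}\leq \mathcal{L}(x_k,x^*)-\g_k\langle F(x_k),x_k-x^*\rangle+M\g_k^2.
\end{align*}
This is precisely the hypothesis of Lemma~\ref{lemma:supermartingale} with $v_k=\mathcal{L}(x_k,x^*)$, $\alpha_k=0$, $u_k=\g_k\langle F(x_k),x_k-x^*\rangle\geq0$, and $\beta_k=M\g_k^2$, where $\sum_k\beta_k<\infty$ by Assumption~\ref{assump:stepsize}(a). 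Part (a) is then immediate: Lemma~\ref{lemma:supermartingale} gives that $\{\mathcal{L}(x_k,x^*)\}$ converges a.s. to a nonnegative limit, and, as the crucial byproduct, that $\sum_{k=0}^\infty\g_k\langle F(x_k),x_k-x^*\rangle<\infty$ a.s., which drives parts (c) and (d).

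For (b), boundedness is essentially free: every iterate is feasible, $x_k\in X=\prod_iX_i$, and each $X_i$ is bounded by Assumption~\ref{assump:main}(a); equivalently it follows from the a.s. convergence (hence boundedness) of $\mathcal{L}(x_k,x^*)$ together with the lower bound $\mathcal{L}(x_k,x^*)\geq\sum_ip_i^{-1}\tfrac{\mu_{\omega_i}}{2}\|x_k^i-{x^*}^i\|_i^2$ from Lemma~\ref{lemma:proxprop}(a). For (c), I would play the summability from (a) against the non-summability $\sum_k\g_k=\infty$ (Assumption~\ref{assump:stepsize}(b)): since the summands $\g_k\langle F(x_k),x_k-x^*\rangle$ are nonnegative with finite sum while the weights $\g_k$ sum to infinity, the factor $\langle F(x_k),x_k-x^*\rangle$ cannot be bounded away from zero. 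A standard contradiction argument (if $\liminf_k\langle F(x_k),x_k-x^*\rangle\geq\delta>0$, then eventually $\g_k\langle F(x_k),x_k-x^*\rangle\geq\tfrac{\delta}{2}\g_k$ and the weighted sum diverges) forces $\liminf_{k\to\infty}\langle F(x_k),x_k-x^*\rangle=0$, equivalently the claimed $\liminf_{k\to\infty}\langle F(x_k),x^*-x_k\rangle=0$ along a subsequence.

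Part (d) is a quasi-Fej\'er / Opial-type argument. Let $\bar x$ be an accumulation point of $\{x_k\}$ with $\bar x\in\mathrm{SOL}(X,F)$, and pick $x_{k_j}\to\bar x$. Applying (a) with $x^*=\bar x$, the full sequence $\{\mathcal{L}(x_k,\bar x)\}$ converges a.s. to a limit. By continuity of each $D_i$ (recall $\omega_i\in C^1$) and $D_i(\bar x^i,\bar x^i)=0$, we get $\mathcal{L}(x_{k_j},\bar x)\to\mathcal{L}(\bar x,\bar x)=0$ via Lemma~\ref{lemma:lyapProp}(b); a convergent sequence with a subsequence tending to $0$ must itself tend to $0$, so $\mathcal{L}(x_k,\bar x)\to0$. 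The lower bound of Lemma~\ref{lemma:proxprop}(a) gives a positive multiple of $\|x_k-\bar x\|^2$ bounded above by $\mathcal{L}(x_k,\bar x)$, whence $x_k\to\bar x$, i.e. the entire sequence converges to $\bar x$.

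The main obstacle, and the step deserving the most care, is the very first one: correctly converting the recursion of Lemma~\ref{lemma:rec-bound} into the exact supermartingale template of Lemma~\ref{lemma:supermartingale}. This requires the pseudo-monotonicity sign manipulation above and, more delicately, checking that the conditioning in Lemma~\ref{lemma:rec-bound} (on $\sF_{k-1}$, with $\mathcal{L}(x_k,x^*)$ being $\sF_{k-1}$-measurable) is consistent with the nested filtration demanded by the supermartingale lemma, so that the nonnegative term $u_k$ can legitimately be summed a.s. Once that alignment is in place, (a)--(d) follow by the standard machinery sketched above with only routine estimates.
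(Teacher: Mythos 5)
Your proposal is correct and follows essentially the same route as the paper's proof: specializing Lemma~\ref{lemma:rec-bound} at $x=x^*$, using pseudo-monotonicity to make $u_k=\g_k\langle F(x_k),x_k-x^*\rangle$ nonnegative, invoking Lemma~\ref{lemma:supermartingale} with the identical choices of $v_k,\alpha_k,u_k,\beta_k$ for (a) and (c), deducing (b) from compactness of $X$, and running the same continuity argument on $\mathcal{L}(\cdot,\bar x)$ for (d). No substantive differences or gaps.
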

\begin{proof}
Consider relation \eqref{ineq:rec-bound}. Let $x=x^*$ where $x^* \in \hbox{SOL}(X,F)$. We have
 \begin{align*}
\EXP{\mathcal{L}(x_{k+1},x^*)\mid \sF_{k-1}}&\leq \mathcal{L}(x_k,x^*)-\g_k\langle F(x_k), x_k-x^*\rangle+\sum_{i=1}^d\left(\frac{C_{i}^2+\nu_i^2}{\mu_{\omega_{i}}}+2L_{i}B_{i}(C_{i}+\tilde \nu_i)\right)\g_k^2.
\end{align*}
Next, we apply Lemma \ref{lemma:supermartingale}. Let us define \[\alpha_k=0, \quad \beta_k=\sum_{i=1}^d\left(\frac{C_{i}^2+\nu_i^2}{\mu_{\omega_{i}}}+2L_{i}B_{i}(C_{i}+\tilde \nu_i)\right)\g_k^2, \quad u_k=\g_k\langle F(x_k), x_k-x^*\rangle,\quad v_k=\mathcal{L}(x_k,x^*).\] From Assumption \ref{assump:stepsize}(a), we have $\sum_{k=0}^\infty \beta_k <\infty$. Moreover, since $x^*$ is a solution to the problem \eqref{eqn:SCVI}, we have $\langle F(x^*), x_k-x^*\rangle \geq 0$. Invoking the pseudo-monotonicity property of $F$, we obtain $u_k \geq 0$. Therefore, since all the conditions of Lemma \ref{lemma:supermartingale} are met, we can conclude that the sequence $\{\mathcal{L}(x_k,x^*)\}$ is convergent almost surely, implying that part (a) holds. Moreover, $\sum_{k=0}^\infty u_k < \infty$ holds almost surely. Since we assumed $\sum_{k=0}^\infty\g_k=\infty$, we conclude that $\liminf_{k \to \infty} \langle F(x_k), x^*-x_k\rangle=0$ almost surely indicating that part (c) holds. Note that since the set $X$ is compact, the sequence $\{x_k\}$ is bounded implying that part (b) holds. Next we show part (d). Form the hypothesis, let $\{x_{k_j}\}$ denote the subsequence of $x_k$ where $\lim_{j\to \infty}x_{k_j}=\bar x \in \hbox{SOL}(X,F)$. From part (a), since $\bar x$ is a solution, the sequence $\{\mathcal{L}(x_k,\bar x)\}$ is convergent. Let $\bar{\mathcal{L}}$ denotes the limit point of $\{\mathcal{L}(x_k,\bar x)\}$, i.e., $\lim_{k\to \infty}\mathcal{L}(x_k,\bar x)=\bar{\mathcal{L}}$. Note that since each function $\omega_i$ is convex and therefore continuous, the function $\mathcal{L}$ is also continuous. Taking this into account, we have
\[\lim_{j\to \infty}\mathcal{L}(x_{k_j},\bar x)=\mathcal{L}\left(\lim_{j\to \infty}x_{k_j},\bar x\right)=\mathcal{L}(\bar x,\bar x)=0.\]
Therefore, $\bar{\mathcal{L}}=0$ implying that $\lim_{k\to \infty}\mathcal{L}(x_k,\bar x)=0$. Invoking continuity of $\mathcal{L}$ again and using Lemma \ref{lemma:lyapProp}, we obtain the desired result.
\end{proof}
The following lemma implies uniqueness of the solution of problem \eqref{eqn:SCVI} under strict pseudo-monotonicity of the mapping. 
\begin{lemma}\label{lem:uniqueness}
Consider problem \eqref{eqn:SCVI}. Let Assumption \ref{assump:main} hold and mapping $F$ be strictly pseudo-monotone on $X$. Then, \eqref{eqn:SCVI} has a unique solution.
\end{lemma}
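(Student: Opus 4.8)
The plan is to treat existence and uniqueness separately: existence comes for free from the standing hypotheses, so the real content is ruling out two distinct solutions via a short contradiction argument driven by strict pseudo-monotonicity. For existence, note that Assumption \ref{assump:main}(a) makes $X$ nonempty, compact, and convex, while Assumption \ref{assump:main}(b) guarantees continuity of $F$ (as recorded in Remark \ref{rem1}); hence by Corollary 2.2.5 in \cite{facchinei02finite} the set $\hbox{SOL}(X,F)$ is nonempty. It therefore suffices to show that no two distinct solutions can coexist.

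For uniqueness, I would argue by contradiction, supposing that $x^*$ and $y^*$ are both solutions of \eqref{eqn:SCVI} with $x^* \neq y^*$. Writing out the defining inequality \eqref{def:VI} for each solution and testing it against the other point yields
\begin{align*}
\langle F(x^*), y^* - x^* \rangle \geq 0, \qquad \langle F(y^*), x^* - y^* \rangle \geq 0.
\end{align*}
The first of these is equivalent to $\langle F(x^*), x^* - y^* \rangle \leq 0$, which will provide one half of the desired contradiction.

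The key step is to invoke Definition \ref{def:map}(e) with the correct assignment of the dummy variables. Taking $y = y^*$ and $x = x^*$, the hypothesis $\langle F(y^*), x^* - y^* \rangle \geq 0$ together with $x^* \neq y^*$ produces the strict inequality $\langle F(x^*), x^* - y^* \rangle > 0$. This is incompatible with $\langle F(x^*), x^* - y^* \rangle \leq 0$ obtained above, so the assumption $x^* \neq y^*$ is untenable and we conclude $x^* = y^*$.

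I do not anticipate a substantive obstacle here, as the argument is a routine contradiction once existence is granted. The only point requiring care is the bookkeeping in the definition of strict pseudo-monotonicity: one must match $x$ and $y$ to the two solutions in the correct order, and must explicitly use the hypothesis $x^* \neq y^*$ so that the conclusion delivered is the strict inequality rather than a merely nonnegative quantity.
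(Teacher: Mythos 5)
Your proof is correct and follows essentially the same route as the paper's: existence via compactness, convexity, and continuity (the paper's Remark \ref{rem1} and Corollary 2.2.5 of \cite{facchinei02finite}), and uniqueness by contradiction from the two variational inequalities combined with strict pseudo-monotonicity. The only difference is cosmetic: the paper applies Definition \ref{def:map}(e) twice and sums to contradict the aggregated inequality $\langle F(x^*)-F({x^*}'),x^*-{x^*}'\rangle\le 0$, whereas you apply it once and contradict $\langle F(x^*),x^*-y^*\rangle\le 0$ directly, which is a slight economy but not a different argument.
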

\begin{proof}
See Appendix \ref{A1}.
\end{proof}
\fyR{Note that Lemma \ref{lemma:pseudo}(d) does not guarantee a.s. convergence to SOL$(X,F)$. To conclude this property, additional strict assumptions are needed. This is addressed in the following result.} 
\begin{proposition}[a.s. convergence under strict pseudo-monotonicity for SCVIs]\label{prop:a.s.vi} Consider problem \eqref{eqn:SCVI} and assume that $F$ is strictly pseudo-monotone on $X$. Let Assumptions \ref{assump:main}, \ref{assump:randvar}, and \ref{assump:stepsize} hold. Let $\{x_k\}$ be generated by Algorithm \ref{algorithm:IRLSA-impl}. Then, \fyR{$x_k$ converges to the unique solution of \eqref{eqn:SCVI}, $x^*$, almost surely.}
\end{proposition}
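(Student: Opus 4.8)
The goal is to upgrade the conditional convergence result of Lemma~\ref{lemma:pseudo}(d)---"if an accumulation point is a solution, then the whole sequence converges to it"---into an unconditional almost sure convergence statement under the stronger hypothesis of strict pseudo-monotonicity. By Lemma~\ref{lem:uniqueness}, strict pseudo-monotonicity guarantees that $\hbox{SOL}(X,F)$ is a singleton $\{x^*\}$, so there is only one candidate limit. The strategy is therefore to produce \emph{some} accumulation point of $\{x_k\}$ that is a solution, and then invoke Lemma~\ref{lemma:pseudo}(d) to conclude that the entire sequence converges to it. Since everything holds almost surely, I would fix a sample path in the probability-one event on which all conclusions of Lemma~\ref{lemma:pseudo} hold simultaneously.

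First I would use Lemma~\ref{lemma:pseudo}(c): almost surely $\liminf_{k\to\infty}\langle F(x_k),x^*-x_k\rangle = 0$. This yields a subsequence $\{x_{k_j}\}$ along which $\langle F(x_{k_j}),x^*-x_{k_j}\rangle \to 0$. By Lemma~\ref{lemma:pseudo}(b) the sequence $\{x_k\}$ is bounded (indeed $X$ is compact by Assumption~\ref{assump:main}(a)), so by passing to a further subsequence I may assume $x_{k_j}\to\bar x$ for some $\bar x\in X$. Continuity of $F$ (guaranteed by the block-Lipschitz property, see Remark~\ref{rem1}) then gives $\langle F(\bar x),x^*-\bar x\rangle = 0$.

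The heart of the argument---and the step I expect to be the main obstacle---is to show that this limit $\bar x$ is in fact the solution $x^*$. Here I would exploit strict pseudo-monotonicity directly. Since $x^*$ solves \eqref{eqn:SCVI}, we have $\langle F(x^*),\bar x - x^*\rangle \geq 0$, which by the definition of strict pseudo-monotonicity (Definition~\ref{def:map}(e)) forces $\langle F(\bar x),\bar x - x^*\rangle > 0$ \emph{unless} $\bar x = x^*$. But the previous paragraph established $\langle F(\bar x),\bar x - x^*\rangle = 0$, a strict contradiction. Hence $\bar x = x^*$, so the accumulation point $\bar x$ is a solution of \eqref{eqn:SCVI}.

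Finally, having exhibited an accumulation point that solves the problem, I would apply Lemma~\ref{lemma:pseudo}(d) to conclude that the entire sequence $\{x_k\}$ converges to $\bar x = x^*$. Since the events underlying parts (b), (c), and (d) of Lemma~\ref{lemma:pseudo} each hold with probability one, their intersection does as well, so the convergence $x_k \to x^*$ holds almost surely, as claimed. The one subtlety to handle carefully is ensuring the contradiction in the third paragraph is genuinely strict: the $\liminf$ being exactly zero (rather than merely nonnegative) is what makes the strict inequality from Definition~\ref{def:map}(e) incompatible with $\bar x \neq x^*$, and this is precisely where strict---as opposed to mere---pseudo-monotonicity is indispensable.
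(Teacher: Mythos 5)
Your proposal is correct and follows essentially the same route as the paper's proof: extract a subsequence along which $\langle F(x_k),x^*-x_k\rangle\to 0$ via Lemma~\ref{lemma:pseudo}(c), pass to a convergent sub-subsequence using boundedness, use continuity of $F$ to get $\langle F(\bar x),x^*-\bar x\rangle=0$, derive a contradiction with strict pseudo-monotonicity unless $\bar x=x^*$, and conclude with Lemma~\ref{lemma:pseudo}(d). The paper packages the continuity step through an auxiliary function $g(x)=\langle F(x),x^*-x\rangle$, but this is only a cosmetic difference.
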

\fyR{\begin{proof}
The uniqueness of the solution of \eqref{eqn:SCVI} is implied by Lemma \ref{lem:uniqueness}. From Lemma \ref{lemma:pseudo}(c), \[\liminf_{k \to \infty} \langle F(x_k), x^*-x_k\rangle=0.\] Let us define the function $g:X\to \Real$ as $g(x)\triangleq\langle F(x), x^*-x\rangle$. We have $\liminf_{k \to \infty}g(x_k)=0$. This implies that there exists a subsequence $\{x_{k_j}\}$ (not necessarily convergent) such that $\lim_{j \to \infty}g(x_{k_j})=0$. From Lemma \ref{lemma:pseudo}(b), the sequence $\{x_k\}$ and therefore its subsequence $\{x_{k_j}\}$ are bounded. Thus, there exists a subsequence of the sequence $\{x_{k_j}\}$ that is convergent. Let us denote that subsequence by $\{x_{k_j(t)}\}$ and its accumulation point by $\hat x$. From $\lim_{j \to \infty}g(x_{k_j})=0$ we have $\lim_{t\to \infty}g(x_{k_j(t)})=0$. Using continuity of $g$, we obtain $g(\hat x)=0$ indicating that \begin{align}\label{ineq:spse4}\langle F(\hat x), x^*-\hat x\rangle=0.\end{align} 
Since $x^* \in $ SOL$(X,F)$, we have $\langle F(x^*),\hat x-x^*\rangle \ge 0$. Using the definition of strict pseudo-monotonicity of $F$ if $\hat x\neq x^*$, we obtain $\langle F(\hat x),\hat x-x^*\rangle > 0.$ This is contradictory to \eqref{ineq:spse4}. Therefore, $\hat x = x^*$. Thus, $x_k$ has an accumulation point $\hat x$ that solves VI$(X,F)$. Using Lemma \ref{lemma:pseudo}(d), we conclude that $x_k$ converges to $x^*$ almost surely. 
\end{proof}}
\section{Rate of convergence analysis for the B-SMP algorithm}\label{sec:rate}
To present the convergence rate of the B-SMP algorithm, in the first part of this section, we derive the rate under the assumption that the mapping $F$ is strongly pseudo-monotone. This result is provided by Proposition \ref{prop:ratestrongpse}. In the second part of this section, we consider a subclass of SCVIs, that is the stochastic convex optimization problems of the form \eqref{prob:SOP}. For this class of problems, we show that under convexity of the objective function, an averaging variant of Algorithm \ref{algorithm:IRLSA-impl} admits a convergence rate given by Proposition \ref{prop:optaveSCO}. Both of these results seem to be new for the stochastic mirror-prox algorithm addressing large-scale SCVIs. In the analysis of the first rate statement in this section, we make use of the following result. 
\begin{lemma}[Convergence rate of a recursive sequence]\label{lemma:rateHarmonic} Let $\{e_k\}$ be a non-negative sequence such that for an arbitrary non-negative sequence $\{\g_k\}$, the following relation is satisfied:
\begin{align}\label{ekbound0}
e_{k+1}\leq (1-\alpha \g_k)e_k+\beta \g_k^2, \quad \hbox{for all } k\geq 0.
\end{align}
where $\alpha$ and $\beta$ are positive scalars. Suppose $\g_0=\g$, $\g_k=\frac{\g}{k}$ for any $k\geq 1$, where $\g>\frac{1}{\alpha}$. Let $K\triangleq\lceil \alpha \g\rceil$. Then, we have 
\begin{align}\label{ekbound1}
e_k\leq \frac{\max \{\frac{\beta\g^2}{\alpha \g-1},Ke_K\}}{k}, \qquad \hbox{for all } k\geq K.
\end{align}
Specifically, if we set $\g=\frac{2}{\alpha}$, then we have 
\begin{align}\label{ekbound2}
e_k\leq \frac{8\beta}{\alpha^2k}, \qquad \hbox{for all } k\geq 2.
\end{align}
 \end{lemma}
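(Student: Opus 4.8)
The plan is to establish \eqref{ekbound1} by induction on $k \ge K$ and then obtain \eqref{ekbound2} by specialization. Abbreviate $M \triangleq \max\{\frac{\beta\g^2}{\alpha\g-1},\,Ke_K\}$, so that \eqref{ekbound1} is exactly the assertion $e_k \le M/k$ for all $k \ge K$. The base case $k=K$ is immediate, since $e_K = Ke_K/K \le M/K$ by the definition of $M$. For the inductive step I assume $e_k \le M/k$ for some $k \ge K$ and substitute $\g_k = \g/k$ into \eqref{ekbound0}, which gives
\begin{align*}
e_{k+1} \le \left(1 - \frac{\alpha\g}{k}\right)e_k + \frac{\beta\g^2}{k^2}.
\end{align*}
The first thing to verify is that the multiplier $1 - \alpha\g/k$ is nonnegative; this is where the restriction $k \ge K$ enters, because $k \ge K = \lceil \alpha\g\rceil \ge \alpha\g$. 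Only with this sign information can the induction hypothesis be used in the correct direction, yielding $e_{k+1} \le \frac{M}{k} - \frac{\alpha\g M - \beta\g^2}{k^2}$.

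It remains to show the right-hand side is at most $M/(k+1)$, and this reduction is the crux of the argument. Using $\frac{M}{k} - \frac{M}{k+1} = \frac{M}{k(k+1)}$, the target inequality becomes $\frac{Mk}{k+1} \le \alpha\g M - \beta\g^2$. Since $M \ge 0$ forces $\frac{Mk}{k+1} \le M$, it suffices to prove $M \le \alpha\g M - \beta\g^2$, which rearranges (using $\alpha\g>1$) to $M \ge \frac{\beta\g^2}{\alpha\g-1}$ --- precisely one of the two quantities defining $M$. This closes the induction and establishes \eqref{ekbound1}.

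For \eqref{ekbound2} I set $\g = 2/\alpha$, so $\alpha\g = 2$ and $K = \lceil 2\rceil = 2$, whence $\frac{\beta\g^2}{\alpha\g-1} = \frac{4\beta}{\alpha^2}$. The only remaining work is to bound $Ke_K = 2e_2$. Applying \eqref{ekbound0} at $k=0$ and $k=1$, where $\g_0 = \g_1 = \g = 2/\alpha$ makes the multiplier $1 - \alpha\g_k = -1$ negative, and discarding the resulting nonpositive terms $-e_0, -e_1$ (both $e_0,e_1 \ge 0$), gives $e_1 \le 4\beta/\alpha^2$ and then $e_2 \le 4\beta/\alpha^2$. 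Hence $2e_2 \le 8\beta/\alpha^2$ and $M = \max\{4\beta/\alpha^2,\,2e_2\} \le 8\beta/\alpha^2$, so \eqref{ekbound1} yields $e_k \le 8\beta/(\alpha^2 k)$ for $k \ge 2$.

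I expect the genuinely delicate point to be the specialized case rather than the general induction: the choice $\g=2/\alpha$ renders the early contraction factors negative, so the recursion cannot be iterated naively and one must instead exploit nonnegativity of the $e_k$ to drop terms when bounding $e_2$. The general inductive step, by contrast, is routine once one observes that the algebra collapses exactly onto the defining condition $M \ge \beta\g^2/(\alpha\g-1)$, which is engineered into the statement of the lemma.
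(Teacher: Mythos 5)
Your proof is correct and follows essentially the same route as the paper's: induction on $k\ge K$ with the base case $e_K\le M/K$, the observation that $k\ge\lceil\alpha\g\rceil$ makes the factor $1-\alpha\g/k$ nonnegative so the hypothesis can be applied, and the algebraic reduction of the inductive step to $M\ge\frac{\beta\g^2}{\alpha\g-1}$. The specialization to $\g=2/\alpha$ likewise matches the paper, which bounds $e_2\le\frac{4\beta}{\alpha^2}-e_1\le\frac{4\beta}{\alpha^2}$ by dropping the nonpositive term exactly as you do.
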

\begin{proof}
See Appendix \ref{A3}.
\end{proof}
Next, we derive the convergence rate for the B-SMP method under when the mapping is strongly pseudo-monotone. 
\begin{proposition}[Rate statement under strong pseudo-monotonicity for SCVIs]\label{prop:ratestrongpse} 
Consider problem \eqref{eqn:SCVI} and assume that the mapping $F$ is $\mu$-strongly pseudo-monotone on $X$ with respect to the norm $\|\cdot\|$. Let Assumptions \ref{assump:main} and \ref{assump:randvar} hold. Let $\{x_k\}$ be generated by Algorithm \ref{algorithm:IRLSA-impl}. Then for $k\geq 0$, we have:
 \begin{align}\label{ineq:rec-bound3}
\EXP{\mathcal{L}(x_{k+1},{x^*})}&\leq \left(1-\g_k\frac{2\mu\min\limits_{1\leq i\leq d}p_i}{\max\limits_{1\leq i\leq d}L_{\omega_i}}\right)\EXP{\mathcal{L}(x_k,{x^*})}+\sum_{i=1}^d\left(\frac{C_{i}^2+\nu_i^2}{\mu_{\omega_{i}}}+2L_{i}B_{i}(C_{i}+\tilde \nu_i)\right)\g_k^2.
\end{align}
Let the probability distribution $P_b$ be uniform, i.e., $p_i=\frac{1}{d}$ for all $i \in \{1,\ldots,d\}$. Suppose the stepsize $\g_k$ is given by the following rule:
\begin{align}
\g_0\triangleq\frac{d\max\limits_{1\leq i\leq d}L_{\omega_i}}{\mu}, \qquad  \g_{k}=\frac{\g_0}{k}, \quad \hbox{for all }k \geq 1.\label{stepsize}
\end{align}
Then, $x_k$ converges to $x^*$ almost surely. Moreover, we have 
 \begin{align}\label{ineq:rec-bound4}
 \EXP{\|x_k-{x^*}\|^2}\leq \dfrac{\mathcal{A}d}{k}, \quad \hbox{for all } k\geq2,
\end{align}
where $\mathcal{A}\triangleq\frac{4\sum_{i=1}^d\left(\frac{C_{i}^2+\nu_i^2}{\mu_{\omega_{i}}}+2L_{i}B_{i}(C_{i}+\tilde \nu_i)\right)\left(\max\limits_{1\leq i\leq d}L_{\omega_i}\right)^2 }{\mu^2\min\limits_{1\leq i\leq d}\mu_{\omega_i}}$.
\end{proposition}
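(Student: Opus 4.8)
The plan is to chain the generic one-step recursion of Lemma~\ref{lemma:rec-bound} together with the defining inequality of strong pseudo-monotonicity and the two-sided comparison between $\mathcal{L}$ and $\|\cdot\|^2$ furnished by Lemma~\ref{lemma:proxprop}(a), and then to close the argument with the recursive-rate Lemma~\ref{lemma:rateHarmonic}. First I would set $x=x^*$ in \eqref{ineq:rec-bound}. Since $x^*\in\hbox{SOL}(X,F)$, the variational inequality evaluated at $x=x_k$ gives $\langle F(x^*),x_k-x^*\rangle\geq0$; feeding this into the $\mu$-strong pseudo-monotonicity of $F$ (Definition~\ref{def:map}(f)) yields $\langle F(x_k),x_k-x^*\rangle\geq\mu\|x_k-x^*\|^2$, so that the linear term obeys $\g_k\langle F(x_k),x^*-x_k\rangle\leq-\g_k\mu\|x_k-x^*\|^2$.

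Next I would convert this squared-norm penalty into a multiple of $\mathcal{L}(x_k,x^*)$. Using the upper estimate $D_i(x_k^i,(x^*)^i)\leq\frac{L_{\omega_i}}{2}\|x_k^i-(x^*)^i\|_i^2$ from Lemma~\ref{lemma:proxprop}(a) together with $\mathcal{L}=\sum_{i=1}^d p_i^{-1}D_i$ and $\|x\|^2=\sum_{i=1}^d\|x^i\|_i^2$, one obtains $\mathcal{L}(x_k,x^*)\leq\frac{\max_i L_{\omega_i}}{2\min_i p_i}\|x_k-x^*\|^2$, hence $\|x_k-x^*\|^2\geq\frac{2\min_i p_i}{\max_i L_{\omega_i}}\mathcal{L}(x_k,x^*)$. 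Substituting this lower bound and then taking total expectation via the tower property over $\sF_{k-1}$ produces exactly the recursion \eqref{ineq:rec-bound3}.

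For the rate I would specialize to the uniform law $p_i=\frac1d$, so that the contraction factor reads $\alpha=\frac{2\mu}{d\max_i L_{\omega_i}}$, and set $\beta=\sum_{i=1}^d\big(\frac{C_i^2+\nu_i^2}{\mu_{\omega_i}}+2L_iB_i(C_i+\tilde\nu_i)\big)$. The prescribed stepsize \eqref{stepsize} is precisely $\g_0=\frac2\alpha$, so Lemma~\ref{lemma:rateHarmonic} applied with $e_k=\EXP{\mathcal{L}(x_k,x^*)}$ gives $\EXP{\mathcal{L}(x_k,x^*)}\leq\frac{8\beta}{\alpha^2k}$ for $k\geq2$. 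I would then return to mean-squared error through the lower bound $D_i\geq\frac{\mu_{\omega_i}}{2}\|\cdot\|_i^2$, which with $p_i^{-1}=d$ yields $\mathcal{L}(x_k,x^*)\geq\frac{d\min_i\mu_{\omega_i}}{2}\|x_k-x^*\|^2$, i.e. $\EXP{\|x_k-x^*\|^2}\leq\frac{2}{d\min_i\mu_{\omega_i}}\EXP{\mathcal{L}(x_k,x^*)}$; inserting $\alpha^2=\frac{4\mu^2}{d^2(\max_i L_{\omega_i})^2}$ and simplifying collapses to $\frac{\mathcal{A}d}{k}$ with the stated $\mathcal{A}$. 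The almost-sure claim is then immediate: $\mu$-strong pseudo-monotonicity implies strict pseudo-monotonicity (Definition~\ref{def:map}(f)$\Rightarrow$(e)), and the harmonic stepsize $\g_k=\g_0/k$ is square-summable yet non-summable, so Assumption~\ref{assump:stepsize} holds and Proposition~\ref{prop:a.s.vi} applies verbatim.

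The computations are largely bookkeeping, so I expect no conceptual obstacle; the delicate point is keeping the two invocations of Lemma~\ref{lemma:proxprop}(a) consistent. The upper estimate on $D_i$ (carrying $\max_i L_{\omega_i}$ and $\min_i p_i$) is what manufactures the contraction coefficient inside the recursion, while the lower estimate (carrying $\min_i\mu_{\omega_i}$) is needed only at the final step to translate the Lyapunov decay into an $\|\cdot\|^2$-decay; one must also track the explicit dependence on $d$ carefully through $\alpha$, $\alpha^2$, and the factor $p_i^{-1}=d$ so that the leading factor $d$ and the constant $\mathcal{A}$ emerge exactly as claimed.
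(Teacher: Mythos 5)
Your proposal is correct and follows essentially the same route as the paper's proof: substitute $x=x^*$ into Lemma~\ref{lemma:rec-bound}, invoke strong pseudo-monotonicity and the upper bound in Lemma~\ref{lemma:proxprop}(a) to manufacture the contraction coefficient $\frac{2\mu\min_i p_i}{\max_i L_{\omega_i}}$, close with Lemma~\ref{lemma:rateHarmonic} and the strong-convexity lower bound, and defer the almost-sure claim to Proposition~\ref{prop:a.s.vi}. The constants, including $\alpha=\frac{2\mu}{d\max_i L_{\omega_i}}$ and the final $\mathcal{A}d/k$, check out exactly as in the paper.
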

\begin{proof}
Since $F$ is strongly pseudo-monotone, similar to the first part of the proof of Proposition \ref{prop:a.s.vi}, it can be shown that the solution set is a singleton. Let $x^*$ denote the unique solution to problem \eqref{eqn:SCVI}. Consider relation \eqref{ineq:rec-bound}. \fyR{Taking expectations on both sides, we have}
 \begin{align}\label{ineq:rec-bound2}
\EXP{\mathcal{L}(x_{k+1},x^*)}&\leq \EXP{\mathcal{L}(x_k,x^*)}-\g_k\EXP{\langle F(x_k), x_k-x^*\rangle}+\sum_{i=1}^d\left(\frac{C_{i}^2+\nu_i^2}{\mu_{\omega_{i}}}+2L_{i}B_{i}(C_{i}+\tilde \nu_i)\right)\g_k^2.
\end{align}
Since $x^*$ solves problem \eqref{eqn:SCVI}, we have $\langle F(x^*),x_k-x^*\rangle \geq 0$. The definition of strong pseudo-monotonicity of $F$ implies that $\langle F(x_k),x_k-x^*\rangle \geq \mu\|x_k-x^*\|^2$. From the definition of the norm $\|\cdot\|$, we obtain
\begin{align*}
\langle F(x_k),x_k-x^*\rangle &\geq \mu\sum_{i=1}^d\|x_k^i-{x^*}^i\|^2_i \geq 2\mu\sum_{i=1}^d\frac{D_i(x_k^i,{x^*}^i)}{L_{\omega_{i}}}\\& \geq \frac{2\mu\min\limits_{1\leq i\leq d}p_i}{\max\limits_{1\leq i\leq d}L_{\omega_i}}\sum_{i=1}^d p_i^{-1}D_i(x_k^i,{x^*}^i)=\frac{2\mu\min\limits_{1\leq i\leq d}p_i}{\max\limits_{1\leq i\leq d}L_{\omega_i}}\mathcal{L}(x_k,x^*),
\end{align*}
where in the second inequality, we used the Lipschitzian property of the distance generator function $\omega_i$ (cf. Lemma \ref{lemma:proxprop}(a)). From the preceding relation, the definition of $\mathcal{L}$, and \eqref{ineq:rec-bound2}, we obtain the desired inequality \eqref{ineq:rec-bound3}. The almost sure convergence of the sequence $\{x_k\}$ to $x^*$ follows directly from Proposition \ref{prop:a.s.vi}. Next, we establish the rate statement. 
We apply the result of Lemma \ref{lemma:rateHarmonic} to the inequality \eqref{ineq:rec-bound3}. Since $P_b$ has a uniform distribution, we have $p_i=\frac{1}{d}$ for all $i$. Let us define 
\[e_k\triangleq\EXP{\mathcal{L}(x_{k},{x^*})}, \hbox{for all } k\geq 0;\quad \alpha\triangleq \frac{2\mu}{d\max\limits_{1\leq i\leq d}L_{\omega_i}}; \quad \beta \triangleq\sum_{i=1}^d\left(\frac{C_{i}^2+\nu_i^2}{\mu_{\omega_{i}}}+2L_{i}B_{i}(C_{i}+\tilde \nu_i)\right),\]
and $\g\triangleq\frac{d\max\limits_{1\leq i\leq d}L_{\omega_i}}{\mu}$. Note that from \eqref{stepsize}, we have $\g_0=\g=\frac{2}{\alpha}$. Therefore, recalling Lemma \ref{lemma:rateHarmonic}, from \eqref{ekbound2}, we have
\begin{align}\label{boundOnLy}
\EXP{\mathcal{L}(x_{k},{x^*})}\leq \frac{8\beta}{\alpha^2k}, \qquad \hbox{for all } k\geq 2.
\end{align}
From the definition of $\mathcal{L}$, the strong convexity of $\omega_i$ for all $i$, and the definition of $\|.\|$, we have 
\[\EXP{\mathcal{L}(x_{k},{x^*})}=d\sum_{i=1}^d\EXP{D_i(x_k^i,{x^*}^i)}\geq d\frac{\min\limits_{1\leq i\leq d}\mu_{\omega_i}}{2}\sum_{i=1}^d\EXP{\|x_k^i-{x^*}^i\|_i^2}=d\frac{\min\limits_{1\leq i\leq d}\mu_{\omega_i}}{2}\EXP{\|x_k-x^*\|^2}.\]
From the preceding inequality, \eqref{boundOnLy}, and the values of $\alpha$ and $\beta$ defined above, we obtain \eqref{ineq:rec-bound4}.
\end{proof}
In the next result, we consider the problem \eqref{prob:SOP} where the objective function is assumed to be convex. In this class of problems, we introduce a new averaging variant of the B-SMP algorithm and derive its convergence rate.
\begin{proposition}[Rate statement under convexity for SCOPs]\label{prop:optaveSCO} Consider problem \eqref{prob:SOP} and assume that the gradient mapping of $f$, denoted by $F$, is monotone. Let Assumptions \ref{assump:main} and \ref{assump:randvar} hold and let $\{x_k\}$ be generated by Algorithm \ref{algorithm:IRLSA-impl}. Let $r<1$ be an arbitrary scalar, the sequence $\g_k$ be non-increasing, and the sequence $\bar x_k$ be given by the following recursive rule for any $k\geq 0$:
\begin{align}
&S_{k+1}\triangleq S_k+\g_k^r,\label{def:averagingS}\\
&\bar x_{k+1}\triangleq\frac{S_k \bar x_k +\g_{k+1}^r x_{k+1}}{S_{k+1}},\label{def:averaging}
\end{align}
where we set $S_0=\g_0^r$ and $\bar x_0=x_0$. Then, for any $K\geq 0$, the following result holds:
\begin{align}\label{ineq:aveBound}
\EXP{f(\bar x_K)}-f^*\leq\left(\sum_{k=0}^{K}\g_k^r \right)^{-1}\left(2\g_{K}^{r-1}\sum_{i=1}^d p_i^{-1}L_{\omega_i}B_i^2+\sum_{i=1}^d\left(\frac{C_{i}^2+\nu_i^2}{\mu_{\omega_{i}}}+2L_{i}B_{i}(C_{i}+\tilde \nu_i)\right)\sum_{k=0}^{K}\g_k^{r+1}\right).
\end{align}
Moreover, if $P_b$ is a uniform distribution and $\g_k=\frac{\g_0}{\sqrt{k+1}}$ where $\g_0\triangleq\g \sqrt{d}$ for some $\g>0$, then for any $K> \max\{\lceil \left(\frac{3-r}{2}\right)^{\frac{2}{1-r}}\rceil,3\}$ we have
\begin{align}\label{ineq:aveRate}
\EXP{f(\bar x_K)}-f^*\leq \frac{\mathcal{B}\sqrt{d}}{\sqrt{K}},
\end{align}
where $\mathcal{B}\triangleq\left(2-r\right)2^{1-0.5r}\left(\frac{2\sum_{i=1}^dL_{\omega_i}B_i^2}{\g} +\frac{ \g\sum_{i=1}^d\left(\frac{C_{i}^2+\nu_i^2}{\mu_{\omega_{i}}}+2L_{i}B_{i}(C_{i}+\tilde \nu_i)\right)}{1-r}\right)$.
\end{proposition}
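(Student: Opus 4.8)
The plan is to start from the one-step recursion of Lemma \ref{lemma:rec-bound} specialized to $x=x^*$, where $x^*$ is a minimizer of $f$. Since $F$ is the (monotone) gradient map of the convex function $f$, convexity gives $\langle F(x_k),x^*-x_k\rangle\le f^*-f(x_k)$. Substituting this into \eqref{ineq:rec-bound} and taking total expectations via the tower property, then rearranging, I would obtain
\begin{align*}
\g_k\left(\EXP{f(x_k)}-f^*\right)\le \EXP{\mathcal{L}(x_k,x^*)}-\EXP{\mathcal{L}(x_{k+1},x^*)}+Q\g_k^2,
\end{align*}
where $Q\triangleq\sum_{i=1}^d\left(\frac{C_i^2+\nu_i^2}{\mu_{\omega_i}}+2L_iB_i(C_i+\tilde\nu_i)\right)$ is the coefficient of $\g_k^2$ in \eqref{ineq:rec-bound}. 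This is the usual starting point; the departure from the classical SMP analysis is that I deliberately do \emph{not} want to weight by $\g_k$.

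Next I would multiply the displayed inequality by $\g_k^{r-1}$ and sum over $k=0,\dots,K$, giving
\begin{align*}
\sum_{k=0}^{K}\g_k^{r}\left(\EXP{f(x_k)}-f^*\right)\le\sum_{k=0}^{K}\g_k^{r-1}\left(\EXP{\mathcal{L}(x_k,x^*)}-\EXP{\mathcal{L}(x_{k+1},x^*)}\right)+Q\sum_{k=0}^{K}\g_k^{r+1}.
\end{align*}
The crux is to control the first right-hand sum by summation by parts. Writing $\ell_k\triangleq\EXP{\mathcal{L}(x_k,x^*)}$, the Abel rearrangement yields $\g_0^{r-1}\ell_0+\sum_{k=1}^{K}(\g_k^{r-1}-\g_{k-1}^{r-1})\ell_k-\g_K^{r-1}\ell_{K+1}$. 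Two structural facts then close the argument: because $r<1$ and $\{\g_k\}$ is non-increasing, each increment $\g_k^{r-1}-\g_{k-1}^{r-1}$ is nonnegative; and by boundedness of each $X_i$ with Lemma \ref{lemma:proxprop}(a), one has the uniform bound $\ell_k\le 2\sum_{i=1}^d p_i^{-1}L_{\omega_i}B_i^2$. Replacing every $\ell_k$ by this bound and dropping the nonpositive $-\g_K^{r-1}\ell_{K+1}$ term telescopes the whole sum to $2\g_K^{r-1}\sum_{i=1}^d p_i^{-1}L_{\omega_i}B_i^2$. Finally, convexity of $f$ and Jensen's inequality give $\EXP{f(\bar x_K)}-f^*\le\left(\sum_{k=0}^K\g_k^r\right)^{-1}\sum_{k=0}^K\g_k^r(\EXP{f(x_k)}-f^*)$, since by construction \eqref{def:averagingS}--\eqref{def:averaging} one has $\bar x_K=\left(\sum_{k=0}^K\g_k^r\right)^{-1}\sum_{k=0}^K\g_k^r x_k$; combining these yields \eqref{ineq:aveBound}.

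To obtain the explicit rate \eqref{ineq:aveRate}, I would substitute $p_i=1/d$ and $\g_k=\g_0/\sqrt{k+1}$ with $\g_0=\g\sqrt{d}$ into \eqref{ineq:aveBound}. Everything then reduces to estimating three scalar quantities: $\sum_{k=0}^K\g_k^r=\g_0^r\sum_{j=1}^{K+1}j^{-r/2}$, the factor $\g_K^{r-1}=\g_0^{r-1}(K+1)^{(1-r)/2}$, and $\sum_{k=0}^K\g_k^{r+1}=\g_0^{r+1}\sum_{j=1}^{K+1}j^{-(r+1)/2}$. Since $r<1$, both exponents $r/2$ and $(r+1)/2$ are strictly below $1$, so these harmonic-type sums are comparable to $\int_1^{K+1}x^{-r/2}\,dx$ and $\int_1^{K+1}x^{-(r+1)/2}\,dx$, i.e. to multiples of $(K+1)^{1-r/2}$ and $(K+1)^{(1-r)/2}$. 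Using a lower bound on the $j^{-r/2}$-sum in the denominator and an upper bound on the $j^{-(r+1)/2}$-sum in the numerator, the powers of $K+1$ combine so that numerator over denominator scales like $(K+1)^{-1/2}$; the conversion of the sums to integrals together with the replacement of $K+1$ by $K$ is exactly what produces the explicit constant $2^{1-0.5r}$ inside $\mathcal{B}$, while the threshold $K>\max\{\lceil\left(\frac{3-r}{2}\right)^{2/(1-r)}\rceil,3\}$ is chosen to guarantee that the subtracted lower-order constants in the integral estimates remain within the stated bound. Collecting the $d$-dependence from $\g_0=\g\sqrt{d}$ then gives the claimed $\frac{\mathcal{B}\sqrt{d}}{\sqrt{K}}$.

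I expect the genuine obstacle to be the summation-by-parts step and the sign bookkeeping it requires. Unlike classical averaging, where the weights coincide with the stepsizes and the telescoping is immediate, here the mismatch between the weight $\g_k^r$ and the natural weight $\g_k$ forces the extra factor $\g_k^{r-1}$, and the argument only succeeds because $r<1$ renders $\{\g_k^{r-1}\}$ non-decreasing and compactness of $X$ caps $\EXP{\mathcal{L}(x_k,x^*)}$ uniformly. The remaining difficulty lies entirely in the second part, where tracking the constants through the integral comparisons while keeping the dependence on $r$, $d$, and $\g$ explicit is delicate but otherwise routine.
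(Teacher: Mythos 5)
Your proposal is correct and follows essentially the same route as the paper's own proof: the one-step recursion of Lemma \ref{lemma:rec-bound} at $x=x^*$ combined with convexity, multiplication by $\g_k^{r-1}$, a summation-by-parts/telescoping argument that exploits $r<1$ (so $\g_k^{r-1}$ is non-decreasing) together with the uniform bound $\EXP{\mathcal{L}(x_k,x^*)}\le 2\sum_{i=1}^d p_i^{-1}L_{\omega_i}B_i^2$, followed by Jensen's inequality on the weighted average and integral comparisons for $\sum_k\g_k^r$ and $\sum_k\g_k^{r+1}$ under the $\sqrt{k+1}$ stepsize. The paper merely phrases the Abel step as an add-and-subtract of $\g_{k-1}^{r-1}\EXP{\mathcal{L}(x_k,x^*)}$ at each $k$ and handles the $k=0$ term separately, which is the same computation.
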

\begin{proof}
First, we use induction on $k$ to show \begin{align}\label{equ:aveFormula}\bar x_k=\sum_{t=0}^{k}\left(\frac{\g_t^r}{\sum_{j=0}^{k}\g_j^r}\right)x_t.\end{align} It holds for $k=0$, since $\bar x_0=x_0$. Let us assume \eqref{equ:aveFormula} holds for $k$. Note that from \eqref{def:averagingS}, $S_k=\sum_{j=0}^k \g_j^r$. Therefore, we have $x_k=\frac{\sum_{t=0}^{k}\g_t^rx_t}{S_k}$. From \eqref{def:averaging}, we have 
\[\bar x_{k+1}=\frac{S_k \bar x_k +\g_{k+1}^r x_{k+1}}{S_{k+1}}=\frac{\sum_{t=0}^{k}\g_t^rx_t +\g_{k+1}^r x_{k+1}}{S_{k+1}}=\frac{\sum_{t=0}^{k+1}\g_t^rx_t}{\sum_{j=0}^{k+1}\g_j^r}.\]
Thus, \eqref{equ:aveFormula} holds for any $k\geq 0$. Next, we show that \eqref{ineq:aveBound} holds. Consider relation \eqref{ineq:rec-bound} and let $x=x^*$, where $x^*$ is an arbitrary optimal solution of problem \eqref{prob:SOP}. We have
 \begin{align*}
\EXP{\mathcal{L}(x_{k+1},x^*)}&\leq \EXP{\mathcal{L}(x_k,x^*)}-\g_k\EXP{\langle F(x_k), x_k-x^*\rangle}+\sum_{i=1}^d\left(\frac{C_{i}^2+\nu_i^2}{\mu_{\omega_{i}}}+2L_{i}B_{i}(C_{i}+\tilde \nu_i)\right)\g_k^2.
\end{align*}
By the convexity of $f$, we have $\langle F(x_k), x_k-x^*\rangle\geq f(x_k)-f^*$. Therefore, we obtain
 \begin{align*}
\g_k\EXP{f(x_k)-f^*}\leq \EXP{\mathcal{L}(x_k,x^*)}-\EXP{\mathcal{L}(x_{k+1},x^*)}+\sum_{i=1}^d\left(\frac{C_{i}^2+\nu_i^2}{\mu_{\omega_{i}}}+2L_{i}B_{i}(C_{i}+\tilde \nu_i)\right)\g_k^2.
\end{align*}
Multiplying both sides by $\g_k^{r-1}$, we get
 \begin{align}\label{ineq:rec-boundn}
\g_k^r\EXP{f(x_k)-f^*}&\leq \g_k^{r-1}\EXP{\mathcal{L}(x_k,x^*)}-\g_k^{r-1}\EXP{\mathcal{L}(x_{k+1},x^*)}+\sum_{i=1}^d\left(\frac{C_{i}^2+\nu_i^2}{\mu_{\omega_{i}}}+2L_{i}B_{i}(C_{i}+\tilde \nu_i)\right)\g_k^{r+1}.\end{align}
Adding and subtracting the term $\g_{k-1}^{r-1}\EXP{\mathcal{L}(x_{k},x^*)}$, we have
 \begin{align*}
\g_k^r\EXP{f(x_k)-f^*}&\leq \g_{k-1}^{r-1}\EXP{\mathcal{L}(x_k,x^*)}-\g_{k}^{r-1}\EXP{\mathcal{L}(x_{k+1},x^*)}+\left(\g_{k}^{r-1}-\g_{k-1}^{r-1}\right)\EXP{\mathcal{L}(x_{k},x^*)}\\ &+\sum_{i=1}^d\left(\frac{C_{i}^2+\nu_i^2}{\mu_{\omega_{i}}}+2L_{i}B_{i}(C_{i}+\tilde \nu_i)\right)\g_k^{r+1}
\end{align*}
Note that using the Lipschitzian property of $\omega_i$, we get 
\fyR{\begin{align}\label{boundOnL}\mathcal{L}(x_{k+1},x^*)=\sum_{i=1}^dp_i^{-1}D_i(x_k^i,{x^*}^i)\leq \sum_{i=1}^d \frac{p_i^{-1}L_{\omega_i}}{2}\left(\|x_k^i\|_i+\|{x^*}^i\|_i\right)^2=2\sum_{i=1}^d p_i^{-1}L_{\omega_i}B_i^2.
\end{align}}
Also note that since $\g_k$ is non-increasing, and $r<1$, we have $\g_{k}^{r-1}-\g_{k-1}^{r-1}\geq 0$. Therefore, from the two preceding inequalities, we obtain 
 \begin{align*}
\g_k^r\EXP{f(x_k)-f^*}&\leq\g_{k-1}^{r-1}\EXP{\mathcal{L}(x_k,x^*)}-\g_{k}^{r-1}\EXP{\mathcal{L}(x_{k+1},x^*)}+\left(\g_{k}^{r-1}-\g_{k-1}^{r-1}\right)2\sum_{i=1}^d p_i^{-1}L_{\omega_i}B_i^2\\ &+\sum_{i=1}^d\left(\frac{C_{i}^2+\nu_i^2}{\mu_{\omega_{i}}}+2L_{i}B_{i}(C_{i}+\tilde \nu_i)\right)\g_k^{r+1}.
\end{align*}
Summing over $k$, from $k=1$ to $K$, we have  
 \begin{align*}
\sum_{k=1}^{K}\g_k^r\EXP{f(x_k)-f^*}&\leq\g_0^{r-1}\EXP{\mathcal{L}(x_1,x^*)}-\g_{K}^{r-1}\EXP{\mathcal{L}(x_{K+1},x^*)}+\left(\g_{K}^{r-1}-\g_{0}^{r-1}\right)2\sum_{i=1}^d p_i^{-1}L_{\omega_i}B_i^2\\&+\sum_{i=1}^d\left(\frac{C_{i}^2+\nu_i^2}{\mu_{\omega_{i}}}+2L_{i}B_{i}(C_{i}+\tilde \nu_i)\right)\sum_{k=1}^{K}\g_k^{r+1}.
\end{align*}
Next, we add the preceding inequality with \eqref{ineq:rec-boundn} for $k=0$:
 \begin{align*}
\sum_{k=0}^{K}\g_k^r\EXP{f(x_k)-f^*}&\leq
\left(2\sum_{i=1}^d p_i^{-1}L_{\omega_i}B_i^2\right)\g_{K}^{r-1}+\sum_{i=1}^d\left(\frac{C_{i}^2+\nu_i^2}{\mu_{\omega_{i}}}+2L_{i}B_{i}(C_{i}+\tilde \nu_i)\right)\sum_{k=0}^{K}\g_k^{r+1},
\end{align*}
where in the preceding inequality, \fyR{we employed the bound on $\EXP{\mathcal{L}(x_0,x^*)}$ given by \eqref{boundOnL}}. Dividing both sides by $\sum_{k=0}^{K}\g_k^r$, using definition of $\bar x_K$, and taking into account the convexity of $f$, we obtain the inequality \eqref{ineq:aveBound}. In the last part of the proof, we derive the rate statement given by \eqref{ineq:aveRate}. \fyR{We make use of the following inequality holding for $\g_k=\frac{\g_0}{\sqrt{k+1}}$. 
\begin{align}\label{ineq:boundForSumGammaR} 
\sum_{k=0}^K\g_k^{r+1}\leq \g_0^{(r+1)}\left(1+\frac{(K+2)^{0.5(1-r)}-1}{0.5(1-r)}\right), \quad \hbox{for all } r<1.
\end{align} The proof for this inequality is provided in Appendix \ref{A2}.}
Note that since we assumed $K>\lceil \left(\frac{3-r}{2}\right)^{\frac{2}{1-r}}\rceil$, we have 
\begin{align}\label{ineq:UBforS}
\sum_{k=0}^K\g_k^{r+1}\leq 2\g_0^{(r+1)}\left(\frac{(K+2)^{0.5(1-r)}-1}{0.5(1-r)}\right)\leq \frac{ 4\g_0^{(r+1)}(K+2)^{0.5(1-r)}}{1-r}, \quad \hbox{for all } r<1.
\end{align}
 Let us define $\theta\triangleq\sum_{i=1}^d\left(\frac{C_{i}^2+\nu_i^2}{\mu_{\omega_{i}}}+2L_{i}B_{i}(C_{i}+\tilde \nu_i)\right)$. From \eqref{ineq:UBforS}, \eqref{ineq:aveBound}, and the choice of $P_b$ being a uniform distribution, we obtain 
\begin{align*}
&\EXP{f(\bar x_K)}-f^*\leq \left(\frac{2(1-0.5r)}{\g_0^r(K+1)^{1-0.5r}}\right)\left(2d\left(\sum_{i=1}^dL_{\omega_i}B_i^2\right)\g_0^{r-1}(K+1)^{0.5(1-r)}+\frac{\theta \g_0^{r+1}(K+2)^{0.5(1-r)}}{1-r}\right)\\
&\leq \left(\frac{2-r}{\g \sqrt{d}(K+2)^{1-0.5r}}\right)\left(\frac{(K+2)^{1-0.5r}}{(K+1)^{1-0.5r}}\right)\left(2d\left(\sum_{i=1}^dL_{\omega_i}B_i^2\right) (K+2)^{0.5(1-r)}+\frac{\theta d \g^2(K+2)^{0.5(1-r)}}{1-r}\right)\\
& \leq \left(2-r\right)2^{1-0.5r}\left(\frac{2\sum_{i=1}^dL_{\omega_i}B_i^2}{\g} +\frac{\theta  \g}{1-r}\right)\frac{\sqrt{d}}{\sqrt{K+2}}.
\end{align*}
Therefore, we conclude the desired rate result.
\end{proof}
\section{Stochastic mirror-prox algorithm for SCVIs with optimal averaging}\label{sec:SMP}
In this section, our goal lies in the development of a stochastic mirror-prox algorithm to address SCVIs when the number of component sets is not huge. In contrast with the previous sections that we studied the convergence of the B-SMP algorithm, here we employ a stochastic mirror-prox algorithm in that at each iteration, all the blocks of the solution iterate are updated. Algorithm \ref{algorithm:SMP} presents the steps of the underlying method. Note that here we also employ a weighted averaging scheme similar to that of the previous section. However, the analysis of this section is different than that of Proposition \ref{prop:optaveSCO} for different reasons: (i) In this section, we do not require the Lipschitzian property of the mapping; (ii) Here we address SCVIs while Proposition \ref{prop:optaveSCO} addresses optimization problems; (iii) Our scheme here is a full-block scheme, i.e., all the blocks are updated, while the scheme in Proposition \ref{prop:optaveSCO} is a block variant of SMP method; (iv) Lastly, the averaging sequence here is $\bar y_k$, while in Proposition \ref{prop:optaveSCO} we use $\bar x_k$ as the averaging sequence. It is worth noting that in contrast with our earlier work \cite{Farzad2} where we employed a distributed stochastic approximation method for solving SCVIs, here we develop a stochastic mirror-prox method that requires two projections for each block in each iteration.

Unlike optimization problems, where the objective function provides a metric
	for measuring the performance of the algorithms, there is no immediate analog in
		variational inequality problems. Different variants of gap function have been used in the analysis of variational inequalities (cf. Chapter 10 in \cite{facchinei02finite}). To derive a convergence rate, here we use the following gap function that was also employed in \cite{Nem11}. 
\begin{definition}[Gap function]\label{def:gap1}
Let $X \subset \mathbb{R}^n$  be a non-empty and closed set. 
Suppose that mapping $F: X\rightarrow \mathbb{R}^n$ is defined on the set $X$. We define the following gap function $\hbox{G}: X \rightarrow \mathbb{R}^+\cup \{0\}$ to measure the accuracy of a vector $x \in X$: 
\begin{align}\label{equ:gapf}
G(x)= \sup_{y \in X} \langle F(y),x-y\rangle.
\end{align}
\end{definition}
It follows that $G(x)\geq 0$ for any $x \in X$ and $G(x^*)=0$ for any $x^* \in \hbox{SOL}(X,F)$ under monotonicity of $F$. We note that the function $\hbox{G}$ is indeed also a function of the set $X$ and the map $F$, but we do not \fyR{explicitly display this dependence} and {we} use $\hbox{G}$ instead of~$\hbox{G}_{X,F}$. The following result, provides the optimal convergence rate for the SMP method. \fyR{We show that the expected gap function of the \fyR{averaged} sequence $\bar y_K$ is bounded by a term of the order $\frac{1}{\sqrt{K}}$.}
\begin{proposition}[Rate statement under monotonicity for SMP algortihm for SCVIs]\label{prop:optaveSCVI} \fyR{Consider problem \eqref{eqn:SCVI}} and assume that the mapping $F$ be monotone on $X$. Let Assumption \ref{assump:main}(a,c,d) holds, and suppose $\xi$ and $\tilde \xi$ are i.i.d. random variables satisfying Assumption \ref{assump:randvar}(b,c). Let $\{x_k\}$ be generated by Algorithm \ref{algorithm:SMP}. Let $r<1$ be an arbitrary scalar, the sequence $\g_k$ be non-increasing, and the sequence $\bar y_k$ be given by \eqref{def:averagingS-SCVI}-\eqref{def:averaging-SCVI}. Then, for any $K\geq 1$ the following result holds:
\begin{align}\label{ineq:lastineqresult}
\EXP{G(\bar y_{K})}&\leq  \left(\sum_{k=0}^{K-1}\g_k^r\right)^{-1}\left(4\g_{K-1}^{r-1}\sum_{i=1}^dL_{\omega_i}B_i^2+\sum_{k=0}^{K-1}\g_k^{r+1}\sum_{i=1}^d\left(\frac{2}{\mu_{\omega_{i}}}\right)\left(2C_{i}^2 +{\tilde \nu_i}^2+1.25\nu_i^2\right)\right).
\end{align}
Moreover, if $\g_k=\frac{\g_0}{\sqrt{k+1}}$ for some $\g_0>0$, then for any $K> \max\{\lceil \left(\frac{3-r}{2}\right)^{\frac{2}{1-r}}\rceil,3\}$, we have
\begin{align*}
&\EXP{G(\bar y_{K})}\leq  \frac{\mathcal{M}}{\sqrt{K}},
\end{align*}
where $\mathcal{M}\triangleq(2-r)2^{1-0.5r}\left(\frac{4\sum_{i=1}^dL_{\omega_i}B_i^2}{\g_0}+\frac{\g_0\sum_{i=1}^d\left(\frac{2}{\mu_{\omega_{i}}}\right)\left(2C_{i}^2 +{\tilde \nu_i}^2+1.25\nu_i^2\right)}{1-r}\right)$.
\end{proposition}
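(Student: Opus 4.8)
The plan is to mirror the weighted-averaging argument of Proposition \ref{prop:optaveSCO}, but to drive everything through the gap function $G$ of Definition \ref{def:gap1} while contending with two new difficulties: the supremum over $y\in X$ inside $G$, and the absence of the Lipschitz bound that was available in the block setting. First I would record, exactly as in \eqref{equ:aveFormula}, that the recursion \eqref{def:averagingS-SCVI}--\eqref{def:averaging-SCVI} yields $\bar y_K=\left(\sum_{k=0}^{K-1}\g_k^r\right)^{-1}\sum_{k=0}^{K-1}\g_k^r\,y_{k+1}$, so that by linearity $\left(\sum_{k=0}^{K-1}\g_k^r\right)\langle F(z),\bar y_K-z\rangle=\sum_{k=0}^{K-1}\g_k^r\langle F(z),y_{k+1}-z\rangle$ for every fixed $z\in X$. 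Next, applying Lemma \ref{lemma:proxprop}(b) to the second projection $x_{k+1}=P(x_k,\g_kF(y_{k+1},\xi_k))$ with target $z$, and Lemma \ref{lemma:proxprop}(b) again to the first projection $y_{k+1}=P(x_k,\g_kF(x_k,\tilde\xi_k))$ with target $x_{k+1}$, then combining them through a Young inequality on $\g_k\langle F(y_{k+1},\xi_k)-F(x_k,\tilde\xi_k),\,y_{k+1}-x_{k+1}\rangle$ and absorbing $\tfrac{\mu_{\omega_i}}{2}\|y_{k+1}^i-x_{k+1}^i\|_i^2$ into $D_i(y_{k+1}^i,x_{k+1}^i)$ via Lemma \ref{lemma:proxprop}(a), I obtain for every $z\in X$ the scale-$\g_k$ inequality
\[
\g_k\langle F(y_{k+1},\xi_k),\,y_{k+1}-z\rangle\le D(x_k,z)-D(x_{k+1},z)+\sum_{i=1}^d\frac{\g_k^2}{2\mu_{\omega_i}}\big\|F_i(y_{k+1},\xi_k)-F_i(x_k,\tilde\xi_k)\big\|_{*i}^2,
\]
with $D(x,z)\triangleq\sum_{i=1}^dD_i(x^i,z^i)$ (no $p_i^{-1}$ weights, since all blocks are now updated). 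The error is deliberately kept as the squared oracle difference rather than being bounded by a Lipschitz modulus.

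Writing $F(y_{k+1},\xi_k)=F(y_{k+1})+w_k$ and invoking monotonicity of $F$ (Definition \ref{def:map}(a)) in the form $\langle F(y_{k+1}),y_{k+1}-z\rangle\ge\langle F(z),y_{k+1}-z\rangle$, the noise then enters only as $-\g_k\langle w_k,y_{k+1}-z\rangle$. The $z$-dependent piece $\g_k\langle w_k,z\rangle$ is the obstacle to taking the supremum, and I would neutralise it with an auxiliary ``shadow'' sequence $\{u_k\}\subset X$ defined blockwise by $u_0=x_0$ and $u_{k+1}^i=P_i(u_k^i,-\g_k w_k^i)$. Applying Lemma \ref{lemma:proxprop}(c) to this recursion gives, for every $z$,
\[
\g_k\langle w_k,z-u_k\rangle\le D(u_k,z)-D(u_{k+1},z)+\sum_{i=1}^d\frac{\g_k^2}{2\mu_{\omega_i}}\|w_k^i\|_{*i}^2,
\]
so that the estimate becomes, for every $z\in X$,
\[
\g_k\langle F(z),y_{k+1}-z\rangle\le\big[D(x_k,z)-D(x_{k+1},z)\big]+\big[D(u_k,z)-D(u_{k+1},z)\big]+V_k+\g_k\langle w_k,u_k-y_{k+1}\rangle,
\]
where $V_k$ collects the two $\g_k^2$-variance terms. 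Crucially, the noise now survives only through the $z$-free cross term $\g_k\langle w_k,u_k-y_{k+1}\rangle$ alongside \emph{two} telescoping Bregman structures.

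I would then multiply by $\g_k^{r-1}$ and sum over $k=0,\dots,K-1$, exactly as in the passage following \eqref{ineq:rec-boundn}. Since $\{\g_k\}$ is non-increasing and $r<1$, we have $\g_k^{r-1}-\g_{k-1}^{r-1}\ge0$, so the add-and-subtract trick applied to each telescoping family, together with the uniform bound $D_i(\cdot,z)\le 2L_{\omega_i}B_i^2$ from Lemma \ref{lemma:proxprop}(a) and Assumption \ref{assump:main}(a) (cf. \eqref{boundOnL}), yields $\sum_k\g_k^{r-1}[D(x_k,z)-D(x_{k+1},z)]\le 2\g_{K-1}^{r-1}\sum_iL_{\omega_i}B_i^2$ and the identical bound for the $u$-family, their sum producing precisely the factor $4\g_{K-1}^{r-1}\sum_iL_{\omega_i}B_i^2$ in \eqref{ineq:lastineqresult}. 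Multiplying by $\g_k^{r-1}$ also converts the scale-$\g_k^2$ variance terms into scale-$\g_k^{r+1}$ terms, which is the reason the noise is handled at scale $\g_k$ \emph{before} (not after) the weighting. Using the averaging identity on the left, the right-hand side is $z$-independent, so I may pass to $\sup_{z\in X}$ and place $\left(\sum_k\g_k^r\right)G(\bar y_K)$ on the left.

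Finally I take expectations. The cross term vanishes in mean: $u_k$ is $\sF_{k-1}$-measurable and $y_{k+1}$ is $\sF_{k-1}\cup\{\tilde\xi_k\}$-measurable, while $\EXP{w_k\mid\sF_{k-1},\tilde\xi_k}=0$ by Assumption \ref{assump:randvar}(b), so $\EXP{\g_k^r\langle w_k,u_k-y_{k+1}\rangle}=0$. For the variance terms I bound $\EXP{\|F_i(y_{k+1},\xi_k)-F_i(x_k,\tilde\xi_k)\|_{*i}^2}$ by decomposing $F_i(y_{k+1},\xi_k)-F_i(x_k,\tilde\xi_k)=w_k^i-\tilde w_k^i+(F_i(y_{k+1})-F_i(x_k))$, estimating $\|F_i(y_{k+1})-F_i(x_k)\|_{*i}\le 2C_i$ via boundedness of $F$ (Remark after Assumption \ref{assump:main}) and the noise second moments by $\nu_i^2,\tilde\nu_i^2$ (Assumption \ref{assump:randvar}(c)); grouping with the $\|w_k^i\|_{*i}^2$ term from the shadow sequence through suitable Young weights produces the coefficient $\tfrac{2}{\mu_{\omega_i}}(2C_i^2+\tilde\nu_i^2+1.25\nu_i^2)$. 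Dividing by $\sum_k\g_k^r$ gives \eqref{ineq:lastineqresult}; substituting $\g_k=\g_0/\sqrt{k+1}$, lower-bounding $\sum_k\g_k^r$ and invoking the summation bound \eqref{ineq:boundForSumGammaR} (under the stated hypothesis on $K$) then delivers the $\mathcal{O}(1/\sqrt K)$ rate with constant $\mathcal M$, in complete parallel with the final computation of Proposition \ref{prop:optaveSCO}. I expect the genuinely delicate step to be the shadow sequence $\{u_k\}$: it must simultaneously remove the $z$-dependence so that $\sup_z$ can be taken under one deterministic bound, contribute exactly the second $2\g_{K-1}^{r-1}\sum_iL_{\omega_i}B_i^2$ block responsible for the factor $4$, and leave behind only a mean-zero martingale term, each of which hinges on the measurability bookkeeping among $u_k$, $y_{k+1}$, $\tilde\xi_k$ and $\xi_k$.
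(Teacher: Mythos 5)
Your proposal is correct and follows essentially the same route as the paper's proof: two applications of Lemma~\ref{lemma:proxprop}(b) (one per projection step) combined through a Young inequality absorbed by $D_i(y_{k+1}^i,x_{k+1}^i)$, the auxiliary sequence $u_{k+1}^i=P_i(u_k^i,-\g_k w_k^i)$ to strip the $z$-dependence from the noise before taking the supremum inside $G$, the $\g_k^{r-1}$-weighted telescoping of the two Bregman families (each contributing $2\g_{K-1}^{r-1}\sum_{i=1}^d L_{\omega_i}B_i^2$, hence the factor $4$), and the same measurability bookkeeping that makes $\EXP{\langle w_k,u_k-y_{k+1}\rangle}$ vanish. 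The only cosmetic difference is that you apply a single Young step to the combined oracle difference where the paper applies two separate ones (each at weight $\mu_{\omega_i}/4$); both yield the identical coefficient $\tfrac{2}{\mu_{\omega_i}}\left(2C_i^2+\tilde\nu_i^2+1.25\nu_i^2\right)$.
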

\begin{algorithm}
  \caption{Stochastic mirror-prox algorithm for SCVIs}
\label{algorithm:SMP}
    \begin{algorithmic}[1]
    \STATE \textbf{initialization:} Set a random initial point $x_0\in X$, a stepsize $\g_0>0$, a scalar $r<1$, $y_0=\bar y_0=0 \in \Real^n$, and $\Gamma_0=0$;
    \FOR {$k=0,1,\ldots,K-1$}
     \FOR {$i=1,\ldots,d$}
         \STATE Update the blocks $y_k^i$  and $x_k^i$ using the following relations:
\begin{align}
y_{k+1}^{i}&:= P_{i}\left(x_{k}^{i},\g_k F_{i}(x_{k},\tilde \xi_k)\right),
\\
x_{k+1}^{i}&:=P_{i}\left(x_{k}^{i},\g_k F_{i}(y_{k+1},\xi_k)\right).
\end{align}
    \ENDFOR
    \STATE Update $\Gamma_k$ and $\bar y_{k}$ using the following recursions:
\begin{align}
&\Gamma_{k+1}:=\Gamma_k+\g_k^r,\label{def:averagingS-SCVI}\\
&\bar y_{k+1}:=\frac{\Gamma_k \bar y_k+\g_{k}^r y_{k+1}}{\Gamma_{k+1}}.\label{def:averaging-SCVI}
\end{align}
        \ENDFOR
    \STATE return $\bar y_{K};$
   \end{algorithmic}
\end{algorithm}
\begin{proof}
\fyR{The proof is done in the following three main steps: (Step 1) 
In the first step, we derive a recursive bound for a suitably defined error function. Particularly, we consider the function $\bar D:X\times \Real^n \to \Real$ defined as 
\[\bar D(x,y)\triangleq\sum_{i=1}^dD_i(x^i,y^i) \qquad \hbox{for any }x \in X \hbox{ and } y \in \Real^n.\] This function quantifies the distance of two points characterized by the block Bregman distance functions $D_i$ defined by \eqref{def:Di}; (Step 2) Given the recursive error bound in terms of $\bar D$ in Step 1, we  invoke the definition of the gap function \eqref{equ:gapf} and the averaging sequence $\bar y_{k}$ in \eqref{def:averaging-SCVI} to show the inequality \eqref{ineq:lastineqresult}; (Step 3) In the last step, under the assumption that $\g_k=\frac{\g_0}{\sqrt{k+1}}$, we use relation \eqref{ineq:lastineqresult} to derive the rate result. Below, we present the details in each step.

\noindent \textbf{(Step 1)} } For any arbitrary $i$, consider the relation $y_{k+1}^i=P_{i}\left(x_{k}^i,\g_kF_{i}(x_{k},\tilde \xi_k)\right)$. Applying Lemma \ref{lemma:proxprop}(b), we obtain
\begin{align}\label{ineq:rel1}
\g_k\langle F_{i}(x_k,\tilde \xi_k), y_{k+1}^{i}-x_{k+1}^{i}\rangle +D_i(x_{k}^{i},y_{k+1}^{i}) +D_{i}(y_{k+1}^{i},x_{k+1}^{i})\leq 
D_{i}(x_{k}^{i},x_{k+1}^{i}).
\end{align}
Let vector $x \in X$ be given. Similarly, from $x_{k+1}^i=P_{i}\left(x_{k}^i,\g_kF_{i}(y_{k+1},\xi_{k})\right)$ and Lemma \ref{lemma:proxprop}(b), we obtain 
\begin{align*}
\g_k\langle \fyR{F_{i}(y_{k+1},\xi_k)}, x_{k+1}^{i}-x^{i}\rangle +D_{i}(x_{k}^{i},x_{k+1}^{i}) +D_{i}(x_{k+1}^{i},x^{i})\leq 
D_{i}(x_{k}^{i},x^{i}).
\end{align*}
Adding and subtracting $y_{k+1}^{i}$, the preceding relation yields
\begin{align}\label{ineq:rel2}
\g_k\langle \fyR{F_{i}(y_{k+1},\xi_k)}, x_{k+1}^{i}-y_{k+1}^{i}\rangle +\g_k\langle F_{i}(y_{k+1},\xi), y_{k+1}^{i}-x^{i}\rangle+D_{i}(x_{k}^{i},x_{k+1}^{i}) +D_{i}(x_{k+1}^{i},x^{i})\leq 
D_{i}(x_{k}^{i},x^{i}).
\end{align}
Adding \eqref{ineq:rel1} and \eqref{ineq:rel2}, we obtain
\begin{align*}
&\g_k\langle \fyR{F_{i}(y_{k+1},\xi_k)}-F_{i}(x_k,\tilde \xi_k), x_{k+1}^{i}-y_{k+1}^{i}\rangle +\g_k\langle \fyR{F_{i}(y_{k+1},\xi_k)}, y_{k+1}^{i}-x^{i}\rangle  \notag\\ &+D_{i}(x_{k}^{i},y_{k+1}^{i}) +D_{i}(y_{k+1}^{i},x_{k+1}^{i})+D_{i}(x_{k+1}^{i},x^{i})\leq 
D_{i}(x_{k}^{i},x^{i}) .
\end{align*}
Rearranging the terms in the preceding inequality, we obtain
\begin{align*}
D_i(x_{k+1}^i,x^i) &\leq D_i(x_{k}^i,{x}^i)+\g_k\langle \fyR{F_{i}(x_k,\tilde \xi_k)-F_{i}(y_{k+1},\xi_k)}, x_{k+1}^{i}-y_{k+1}^{i}\rangle\\
& +\g_k\langle F_{i}(y_{k+1},\xi), {x}^{i}-y_{k+1}^{i}\rangle -D_{i}(x_{k}^{i},y_{k+1}^{i})-D_{i}(y_{k+1}^{i},x_{k+1}^{i})
\end{align*}
Using the definition of stochastic errors $\fyR{\tilde w_k}$ and $w_k$, in the preceding result, we can substitute $F_{i}(x_k,\tilde \xi_k)$ by $F_i(x_k)+{\tilde w_k}^i$, and $F_{i}(y_{k+1},\xi_k)$ by $F_i(y_{k+1})+{w_k}^i$. We then obtain
\begin{align*}
D_i(x_{k+1}^i,x^i) &\leq D_i(x_{k}^i,{x}^i)+\g_k\underbrace{\langle F_{i}(x_k)-F_{i}(y_{k+1}), x_{k+1}^{i}-y_{k+1}^{i}\rangle}_{\mbox{Term}\ 1}\\
& +\g_k\langle F_{i}(y_{k+1}), {x}^{i}-y_{k+1}^{i}\rangle -D_{i}(x_{k}^{i},y_{k+1}^{i})- D_{i}(y_{k+1}^{i},x_{k+1}^{i})\\
& +\g_k\underbrace{\langle {\fyR{\tilde w_k}}^{i}-w_k^{i}, x_{k+1}^{i}-y_{k+1}^{i}\rangle}_{\mbox{Term}\ 2}+\g_k\langle w_k^{i}, {x}^{i}-y_{k+1}^{i}\rangle.
\end{align*}
Applying Fenchel's inequality to Term $1$, we obtain 
\begin{align}\label{term1}\mbox{Term}\ 1&=\left\langle \sqrt{\frac{2}{\mu_{\omega_{i}}}}\fyR{\left(F_{i}(x_k)-F_{i}(y_{k+1})\right)}, \sqrt{\frac{\mu_{\omega_{i}}}{2}}\left(x_{k+1}^{i}-y_{k+1}^{i}\right)\right\rangle \notag\\ 
&\leq  \frac{1}{2}\left(\frac{2}{\mu_{\omega_i}}\right)\| F_{i}(x_k)-F_{i}(y_{k+1})\|_{*i}^2+\frac{1}{2}\left(\frac{\mu_{\omega_i}}{2}\right)\|x_{k+1}^{i}-y_{k+1}^{i}\|_{i}^2.\end{align}
Similarly, we may obtain a bound on $\mbox{Term}\ 2$. Using strong convexity of $\omega_{i}$ (cf. Lemma \ref{lemma:proxprop}(a)), we conclude
\begin{align*}
D_i(x_{k+1}^i,x^i)&\leq D_i(x_{k}^i,{x}^i)+\g_k^2\left(\frac{1}{\mu_{\omega_{i}}}\right)\| F_{i}(x_k)-F_{i}(y_{k+1})\|_{*i}^2+ \left(\frac{\mu_{\omega_i}}{4}\right)\|x_{k+1}^{i}-y_{k+1}^{i}\|_{i}^2\\
& +\g_k\langle F_{i}(y_{k+1}), {x}^{i}-y_{k+1}^{i}\rangle -\frac{\mu_{\omega_i}}{2}\|x_{k}^{i}-y_{k+1}^{i}\|_{i}^2- \frac{\mu_{\omega_i}}{2}\|y_{k+1}^{i}-x_{k+1}^{i}\|_{i}^2\\
& +\g_k^2\left(\frac{1}{\mu_{\omega_{i}}}\right)\| {\tilde w}_k^{i}-w_k^{i}\|_{*{i}}^2+\left(\frac{\mu_{\omega_{i}}}{4}\right)\|x_{k+1}^{i}-y_{k+1}^{i}\|_{i}^2+\g_k\langle w_k^{i}, {x}^{i}-y_{k+1}^{i}\rangle.
\end{align*}
Invoking Assumption \ref{assump:main}(c) and that for any $a,b \in \Real$, $(a+b)^2\leq 2(a^2+b^2)$, we have 
\begin{align*}
D_i(x_{k+1}^i,x^i)&\leq D_i(x_{k}^i,{x}^i)+\left(\frac{4}{\mu_{\omega_{i}}}\right)\g_k^2C_i^2+\g_k\langle F_{i}(y_{k+1}), {x}^{i}-y_{k+1}^{i}\rangle -\frac{\mu_{\omega_i}}{2}\|x_{k}^{i}-y_{k+1}^{i}\|_{i}^2\\
& +\g_k^2\left(\frac{1}{\mu_{\omega_i}}\right)\| {\fyR{\tilde w_k}}^{i}-w_k^{i}\|_{*{i}}^2+\g_k\langle w_k^{i}, {x}^{i}-y_{k+1}^{i}\rangle \\
& \leq D_i(x_{k}^i,{x}^i)+\left(\frac{4}{\mu_{\omega_i}}\right)\g_k^2C_{i}^2+\g_k\langle F_{i}(y_{k+1}), {x}^{i}-y_{k+1}^{i}\rangle \\
&+\g_k^2\left(\frac{1}{\mu_{\omega_{i}}}\right)\| {\fyR{\tilde w_k}}^{i}-w_k^{i}\|_{*{i}}^2+\g_k\langle w_k^{i}, {x}^{i}-y_{k+1}^{i}\rangle.
\end{align*}
Using the triangle inequality for the dual norm $\|\cdot\|_{*i}$, from the preceding inequality we have  
\begin{align}\label{ineq:intermediate0}
D_i(x_{k+1}^i,x^i)&\leq D_i(x_{k}^i,{x}^i) +\left(\frac{4}{\mu_{\omega_{i}}}\right)\g_k^2C_{i}^2 +\g_k^2\left(\frac{2}{\mu_{\omega_{i}}}\right)\| {\fyR{\tilde w_k}}^{i}\|_{*{i}}^2+\g_k^2\left(\frac{2}{\mu_{\omega_{i}}}\right)\|w_k^{i}\|_{*{i}}^2\notag\\ &+\g_k\langle F_{i}(y_{k+1}), {x}^{i}-y_{k+1}^{i}\rangle+\g_k\langle w_k^{i}, {x}^{i}-y_{k+1}^{i}\rangle.
\end{align}
We now estimate the term $\g_k\langle w_k^{i}, {x}^{i}-y_{k+1}^{i}\rangle$. Let us define a sequence of vectors $u_{k}\triangleq(u_k^1;u_k^2;\ldots;u_k^d)$ as follows:
\begin{align}\label{def:ut}
u_{k+1}^{i}=P_{i}(u_k^{i},-\g_kw_{k}^{i}), \quad \hbox{for all } k\geq 0,
\end{align}
where $u_0=x_0$. We can write 
\begin{align}\label{boundOnLastTerm}\g_k\langle w_k^{i}, {x}^{i}-y_{k+1}^{i}\rangle =\langle \g_kw_k^{i}, {x}^{i}-u_{k}^{i}\rangle+\g_k\langle w_k^{i}, {u}_k^{i}-y_{k+1}^{i}\rangle. \end{align}
Applying Lemma \ref{lemma:proxprop}(c) and taking to account the definition of $u_k$ in \eqref{def:ut}, we have
\begin{align*}
 D_{i}(u_{k+1}^{i},x^{i})\leq D_{i}(u_k^{i},x^{i})-\langle \g_kw_k^{i}, {x}^{i}-u_{k}^{i}\rangle+\frac{\g_k^2}{2\mu_{\omega_{i}}}\|w_k^{i}\|_{*i}^2.
\end{align*}
Therefore, from the preceding relation and \eqref{boundOnLastTerm}, we obtain
\begin{align*}
 \g_k\langle w_k^{i}, x^{i}-y_{k+1}^{i}\rangle  \leq D_i(u_{k}^i,x^i)-D_i(u_{k+1}^i,x^i)+\frac{\g_k^2}{2\mu_{\omega_{i}}}\|w_k^{i}\|_{*i}^2+\g_k\langle w_k^{i}, {u}_k^{i}-y_{k+1}^{i}\rangle.
\end{align*} 
From \eqref{ineq:intermediate0} and the preceding relation, we obtain 
\begin{align*}
D_i(x_{k+1}^i,x^i)&\leq D_i(x_{k}^i,{x}^i) +\left(\frac{2\g_k^2}{\mu_{\omega_{i}}}\right)\left(2C_{i}^2 +\| {\fyR{\tilde w_k}}^{i}\|_{*{i}}^2+1.25\|w_k^{i}\|_{*{i}}^2\right)\\ 
&+D_i(u_{k}^i,x^i)-D_i(u_{k+1}^i,x^i)+\g_k\langle F_{i}(y_{k+1}), {x}^{i}-y_{k+1}^{i}\rangle+\g_k\langle w_k^{i}, {u}_k^{i}-y_{k+1}^{i}\rangle.
\end{align*}
By summing both sides of the preceding relation over $i$, invoking the definition of function $\bar D$, and the aggregated inner product, we obtain 
\begin{align*}
\bar D(x_{k+1},x)&\leq \bar D(x_{k},{x}) +\sum_{i=1}^d\left(\frac{2\g_k^2}{\mu_{\omega_{i}}}\right)\left(2C_{i}^2 +\| {\fyR{\tilde w_k}}^{i}\|_{*{i}}^2+1.25\|w_k^{i}\|_{*{i}}^2\right)\\ 
&+\bar D(u_{k},x)-\bar D(u_{k+1},x)+\g_k\langle F(y_{k+1}), {x}-y_{k+1}\rangle+\g_k\langle w_k, {u}_k-y_{k+1}\rangle.
\end{align*}

\fyR{\noindent \textbf{(Step 2)}} Next, using monotonicity of $F$ and by rearranging the terms, we further obtain
\begin{align}\label{ineq:boundsomething1}
\g_k\langle F(x), y_{k+1}-x\rangle&\leq  \left(\bar D(x_{k},{x})+\bar D(u_{k},x)\right)-\left(\bar D(x_{k+1},x)+\bar D(u_{k+1},x)\right)\notag\\ 
&+\g_k^2\sum_{i=1}^d\left(\frac{2}{\mu_{\omega_{i}}}\right)\left(2C_{i}^2 +\| {\fyR{\tilde w_k}}^{i}\|_{*{i}}^2+1.25\|w_k^{i}\|_{*{i}}^2\right)+\g_k\langle w_k, {u}_k-y_{k+1}\rangle.
\end{align}
Next, multiplying both sides by $\g_k^{r-1}$, and adding and subtracting $\g_{k-1}^{r-1}\left(\bar D(x_{k},{x})+\bar D(u_{k},x)\right)$, we obtain 
\begin{align*}
\g_k^r\langle F(x), y_{k+1}-x\rangle&\leq  \g_{k-1}^{r-1}\left(\bar D(x_{k},{x})+\bar D(u_{k},x)\right)-\g_k^{r-1}\left(\bar D(x_{k+1},x)+\bar D(u_{k+1},x)\right)\\ 
& +\left(\g_{k}^{r-1}-\g_{k-1}^{r-1}\right)\left(\bar D(x_{k},{x})+\bar D(u_{k},x)\right)\\
&+\g_k^{r+1}\sum_{i=1}^d\left(\frac{2}{\mu_{\omega_{i}}}\right)\left(2C_{i}^2 +\| {\fyR{\tilde w_k}}^{i}\|_{*{i}}^2+1.25\|w_k^{i}\|_{*{i}}^2\right)+\g_k^r\langle w_k, {u}_k-y_{k+1}\rangle.
\end{align*}
Note that since $\g_k$ is non-increasing and $r<1$, we have $\g_{k}^{r-1}-\g_{k-1}^{r-1}\geq 0$. Note furthur that using Lemma \ref{lemma:proxprop}(a) and the definition of $\bar D$, we have\begin{align}\label{ieq:boundonDbar}\bar D(x_{k},{x}) \leq \sum_{i=1}^d\frac{L_{\omega_i}}{2}\|x_k^i-x^i\|_i^2\leq \sum_{i=1}^d\frac{L_{\omega_i}}{2}(\|x_k^i\|_i+\|x^i\|_i)^2\leq 2\sum_{i=1}^dL_{\omega_i}2C_i^2.\end{align}
Similarly, we get $\bar D(u_{k},{x})\leq 2\sum_{i=1}^dL_{\omega_i}B_i^2$.
Using these bounds, summing over $k$ from $1$ to $K-1$, and then removing the negative terms on the right-hand side of the resulting inequality, we obtain
\begin{align*}
\sum_{k=1}^{K-1}\langle F(x), \g_k^ry_{k+1}-x\rangle&\leq  \g_{0}^{r-1}\left(\bar D(x_{1},{x})+\bar D(u_{1},x)\right)+\left(\g_{K-1}^{r-1}-\g_{0}^{r-1}\right)4\sum_{i=1}^dL_{\omega_i}B_i^2\\
&+\sum_{k=1}^{K-1}\g_k^{r+1}\sum_{i=1}^d\left(\frac{2}{\mu_{\omega_{i}}}\right)\left(2C_{i}^2 +\| {\fyR{\tilde w_k}}^{i}\|_{*{i}}^2+1.25\|w_k^{i}\|_{*{i}}^2\right)+\sum_{k=1}^{K-1}\g_k^r\langle w_k, {u}_k-y_{k+1}\rangle.
\end{align*}
Consider \eqref{ineq:boundsomething1} for $k=0$. By multiplying both sides of that relation by $\g_0^{r-1}$, and then summing with the preceding inequality, we obtain
\begin{align*}
\langle F(x), \sum_{k=0}^{K-1}\g_k^r y_{k+1}-x\rangle&\leq  \g_{0}^{r-1}\left(\bar D(x_{0},{x})+\bar D(u_{0},x)\right)+\left(\g_{K-1}^{r-1}-\g_{0}^{r-1}\right)4\sum_{i=1}^dL_{\omega_i}B_i^2\\
&+\sum_{k=0}^{K-1}\g_k^{r+1}\sum_{i=1}^d\left(\frac{2}{\mu_{\omega_{i}}}\right)\left(2C_{i}^2 +\| {\fyR{\tilde w_k}}^{i}\|_{*{i}}^2+1.25\|w_k^{i}\|_{*{i}}^2\right)+\sum_{k=0}^{K-1}\g_k^r\langle w_k, {u}_k-y_{k+1}\rangle.
\end{align*}
Dividing both sides by $\sum_{k=0}^{K-1}\g_k^r$, invoking the definition of $\bar y_{K}$, and using \eqref{ieq:boundonDbar}, we have 
\begin{align*}
\langle F(x), \bar y_{K}-x\rangle&\leq  \left(\sum_{k=0}^{K-1}\g_k^r\right)^{-1}\left(4\g_{K-1}^{r-1}\sum_{i=1}^dL_{\omega_i}B_i^2+\sum_{k=0}^{K-1}\g_k^{r+1}\sum_{i=1}^d\left(\frac{2}{\mu_{\omega_{i}}}\right)\left(2C_{i}^2 +\| {\fyR{\tilde w_k}}^{i}\|_{*{i}}^2+1.25\|w_k^{i}\|_{*{i}}^2\right)\right.\\
&\left.+\sum_{k=0}^{K-1}\g_k^r\langle w_k, {u}_k-y_{k+1}\rangle\right).
\end{align*}
Note that since the right-hand side of the preceding relation is independent of $x$, taking supremum from the left-hand side, and from definition of the G function, we obtain \begin{align*}
G(\bar y_{K})&\leq  \left(\sum_{k=0}^{K-1}\g_k^r\right)^{-1}\left(4\g_{K-1}^{r-1}\sum_{i=1}^dL_{\omega_i}B_i^2+\sum_{k=0}^{K-1}\g_k^{r+1}\sum_{i=1}^d\left(\frac{2}{\mu_{\omega_{i}}}\right)\left(2C_{i}^2 +\| {\fyR{\tilde w_k}}^{i}\|_{*{i}}^2+1.25\|w_k^{i}\|_{*{i}}^2\right)\right.\\
&\left.+\sum_{k=0}^{K-1}\g_k^r\langle w_k, {u}_k-y_{k+1}\rangle\right).
\end{align*}

\fyR{\noindent \textbf{(Step 3)}} Next, taking expectation on both sides, and using Assumption \ref{assump:randvar}(b,c), we can obtain the inequality \eqref{ineq:lastineqresult}.
In the remainder of the proof, we derive the rate statement. For $\g_k=\frac{\g_0}{\sqrt{k+1}}$, in a similar fashion to the proof of \fyR{ inequality \eqref{ineq:boundForSumGammaR}}, for $K\geq 4$, we have 
\begin{align}\label{ineq:LBforSII}
\sum_{k=0}^{K-1}\g_k^{r}\geq \frac{\g_0^{r}(K+1)^{1-0.5r}}{2(1-0.5r)}, \quad \hbox{for all } r<1.
\end{align}
Also, since we assumed that $K>\lceil \left(\frac{3-r}{2}\right)^{\frac{2}{1-r}}\rceil$, we see that 
\begin{align}\label{ineq:UBforSII}
\sum_{k=0}^{K-1}\g_k^{r+1}\leq \frac{ 4\g_0^{(r+1)}(K+1)^{0.5(1-r)}}{1-r}, \quad \hbox{for all } r<1.
\end{align}
From \eqref{ineq:LBforSII}, \eqref{ineq:UBforSII}, and \eqref{ineq:lastineqresult}, we obtain 
\begin{align*}
&\EXP{G(\bar y_{K})}\leq \left(\frac{2(1-0.5r)}{\g_0^rK^{1-0.5r}}\right)\left(4\g_0^{r-1}K^{0.5(1-r)}\sum_{i=1}^dL_{\omega_i}B_i^2+\frac{\mathcal{C}\g_0^{r+1}(K+1)^{0.5(1-r)}}{1-r}\right)\\
&\leq \left(\frac{2-r}{\g_0(K+1)^{1-0.5r}}\right)\left(\frac{(K+1)^{1-0.5r}}{K^{1-0.5r}}\right)\left(4\sum_{i=1}^dL_{\omega_i}B_i^2(K+1)^{0.5(1-r)}+\frac{\mathcal{C}\g_0^2(K+1)^{0.5(1-r)}}{1-r}\right)\\
& \leq (2-r)2^{1-0.5r}\left(\frac{4\sum_{i=1}^dL_{\omega_i}B_i^2}{\g_0}+\frac{\mathcal{C}\g_0}{1-r}\right)\frac{1}{\sqrt{K+1}}.
\end{align*}
where $\mathcal{C}\triangleq\sum_{i=1}^d\left(\frac{2}{\mu_{\omega_{i}}}\right)\left(2C_{i}^2 +{\tilde \nu_i}^2+1.25\nu_i^2\right)$. Therefore, we obtain the desired rate statement.
\end{proof}
\section{Concluding remarks}\label{sec:conRem}
Motivated by the challenges arising from computation of \fyR{solution} to the large-scale stochastic Cartesian variational inequality problems, in the first part of the paper, we develop a randomized block coordinate stochastic mirror-prox (B-SMP) algorithm. At each iteration, only a randomly selected block coordinate of the solution iterate is updated through implementing two consecutive projection steps. Under the assumption of strict pseudo-monotonicity of the mapping, first we prove convergence of the generated sequence to the solution set of the problem in an almost sure sense. To derive the  rate statements, we consider SCVI problems with strongly pseudo-monotone mappings and derive the optimal convergence rate in terms of the problem parameters, prox mapping parameters, iteration number, and number of blocks. We then consider stochastic convex optimization problems on sets with block structures. Under a new weighted averaging scheme, we derive the associated convergence rate. In the second part of the paper, we develop a stochastic mirror-prox algorithm for solving SCVIs where all the blocks are updated at each iteration. We show that using a different weighted averaging sequence, the optimal convergence rate can be achieved.

\section{Appendix}
\subsection{Proof of Lemma \ref{lem:uniqueness}}\label{A1}
We show that VI$(X,F)$ has a unique solution. To arrive at a contradiction, let us assume $x^*, {x^*}' \in $ SOL$(X,F)$ such that $x^*\neq {x^*}'$. Since $x^*$ is a solution to VI$(X,F)$ and that 
${x^*}' \in X$, we have \begin{align}\label{ineq:spse1}\langle F(x^*),{x^*}'-x^*\rangle \ge 0.\end{align} Similarly, since ${x^*}'$ is a solution to VI$(X,F)$ and that ${x^*} \in X$, we have 
\begin{align}\label{ineq:spse2}\langle F({x^*}'),x^*-{x^*}'\rangle \ge 0.\end{align}
From the preceding two inequalities, we obtain 
\begin{align}\label{ineq:spse3}\langle F(x^*)-F({x^*}'),x^*-{x^*}'\rangle \le 0.\end{align}
Next, invoking the definition of strict pseudo-monotonicity of $F$, from \eqref{ineq:spse1}, we obtain $\langle F({x^*}'),{x^*}'-x^*\rangle > 0.$ Simiraly, from \eqref{ineq:spse2}, we have $\langle F({x^*}),x^*-{x^*}'\rangle > 0$. Summing the preceding two inequalities, we have \begin{align*}\langle F({x^*})-F({x^*}'),x^*-{x^*}'\rangle > 0,\end{align*}
which contradicts \eqref{ineq:spse3}. \fyR{Therefore, $x^*={x^*}'$ implying that VI$(X,F)$ has at most one solution. From Remark \ref{rem1}, we conclude that VI$(X,F)$ has a unique solution.}
\subsection{Proof of inequality \eqref{ineq:boundForSumGammaR}}\label{A2}
\fyR{To show this inequality, consider the following two cases:\\ (i) If $r\leq 0$, we have
\[\g_0^{-r}\sum_{k=0}^K\g_k^{r}=\sum_{k=1}^{K+1}k^{-\frac{r}{2}}\geq \int_{0}^{K+1}x^{-0.5r}dx\geq \int_{1}^{K+1}x^{-0.5r}dx=\frac{(K+1)^{1-0.5r}-1}{1-0.5r}.\]
(ii) If $0<r<1$, we have 
\[\g_0^{-r}\sum_{k=0}^K\g_k^{r}=\sum_{k=1}^{K+1}k^{-\frac{r}{2}}\leq \int_{1}^{K+2}x^{-0.5r}dx\geq \int_{1}^{K+1}x^{-0.5r}dx=\frac{(K+1)^{1-0.5r}-1}{1-0.5r}.\]}
Therefore, from the two preceding relations, we have 
\begin{align*}
\sum_{k=0}^K\g_k^{r}\geq \g_0^{r}\left(\frac{(K+1)^{1-0.5r}-1}{1-0.5r}\right), \quad \hbox{for all } r<1.
\end{align*}
\subsection{Proof of Lemma \ref{lemma:rateHarmonic}}\label{A3}
We use induction to show \eqref{ekbound1}. For $k=K$, we have
\[e_K=\frac{Ke_K}{K}\leq \frac{\max \{\frac{\beta\g^2}{\alpha \g-1},Ke_K\}}{K},\]
implying that \eqref{ekbound1} holds for $k=K$. Let us assume \eqref{ekbound1} holds for some $k\geq K$. Note that we have 
\[k\geq \lceil \alpha \g\rceil \geq \alpha \g \Rightarrow 1-\frac{\alpha \g}{k}\geq0.\]Then, using the preceding inequality, relation \eqref{ekbound0}, $\g_k=\frac{\g}{k}$, and the induction hypothesis, we can write
\begin{align}\label{ekbound3}e_{k+1}\leq \left(1-\frac{\alpha\g}{k}\right)e_k+\frac{\beta \g^2}{k^2}\leq \left(1-\frac{\alpha\g}{k}\right)\frac{\theta}{k}+\frac{\beta \g^2}{k^2},\end{align}
where $\theta\triangleq\max \{\frac{\beta\g^2}{\alpha \g-1},Ke_K\}$. From the definition of $\theta$, we have
\begin{align*}
&\theta\geq \frac{\beta\g^2}{\alpha \g-1} \Rightarrow \theta \alpha \g -\beta\g^2 \geq \theta \Rightarrow \frac{\alpha \g \theta -\beta \g^2}{\theta}\geq 1>\frac{k}{k+1}\Rightarrow \frac{\alpha \g \theta -\beta \g^2}{k}\geq \frac{\theta}{k+1}\\
& \Rightarrow \frac{\alpha \g \theta -\beta \g^2}{k^2}\geq \frac{\theta}{k(k+1)}\Rightarrow \frac{\alpha \g \theta}{k^2} -\frac{\beta \g^2}{k^2}\geq \frac{\theta}{k}-\frac{\theta}{k+1}\Rightarrow \frac{\theta}{k+1}\geq \left(1-\frac{\alpha\g}{k}\right)\frac{\theta}{k}+\frac{\beta \g^2}{k^2}.
\end{align*}
Invoking \eqref{ekbound3}, we have $e_{k+1}\leq \frac{\theta}{k+1}$. Therefore, \eqref{ekbound1} holds for all $k\geq K$. Next we show \eqref{ekbound2}. Note that since $1< \alpha \g = 2$, we have $K=\lceil \alpha \g\rceil=2$. Therefore, from \eqref{ekbound2} and $K=2$, we have 
\begin{align*}
e_k\leq \frac{\max \{\frac{\beta\g^2}{\alpha \g-1},2e_2\}}{k}, \qquad \hbox{for all } k\geq 2.
\end{align*}
Replacing $\g$ by $\frac{2}{\alpha}$, we obtain \begin{align}\label{ekbound4}
e_k\leq \frac{\max \{\frac{4\beta}{\alpha^2},2e_2\}}{k}, \qquad \hbox{for all } k\geq 2.
\end{align}
Next we find a bound on $e_2$. From \eqref{ekbound0}, $\g=\frac{2}{\alpha}$, and that $\g_1=\g$, we have \[e_2\leq (1-\alpha \g)e_1+\beta \g^2=\frac{4\beta}{\alpha^2}-e_1 \leq \frac{4\beta}{\alpha^2},\]
where the last inequality is due to non-negativity of $e_1$. Therefore, using the preceding relation and \eqref{ekbound4}, we obtain the desired result. 

\bibliographystyle{abbrv}
\bibliography{SVAA}

\begin{thebibliography}{10}

\bibitem{alpcan02game}
T.~Alpcan and T.~Ba\c{s}ar.
\newblock A game-theoretic framework for congestion control in general topology
  networks.
\newblock In {\em Proceedings of the 41st IEEE Conference on Decision and
  Control}, pages 1218--1224, December 2002.

\bibitem{alpcan03distributed}
T.~Alpcan and T.~Ba\c{s}ar.
\newblock {\em Distributed Algorithms for {N}ash Equilibria of Flow Control
  Games}.
\newblock Birkh{\"a}user Boston, Boston, MA, 2005.

\bibitem{Nemir2000}
A.~Ben-Tal and A.~S. Nemirovski.
\newblock {\em Lectures on Modern Convex Optimization: Analysis, Algorithms,
  Engineering Applications}.
\newblock MOS-SIAM Series on Optimization. SIAM, Philadelphia, 2001.

\bibitem{BertTsitBookPar}
D.~P. Bertsekas and J.~N. Tsitsiklis.
\newblock {\em Parallel and Distributed Computation: Numerical Methods}.
\newblock Prentice-Hall Inc, Englewood Cliffs, 1989.

\bibitem{Wets12SVI}
X.~Chen, R.~J. b~Wets, and Y.~Zhang.
\newblock Stochastic variational inequalities: Residual minimization smoothing
  sample average approximations.
\newblock {\em SIAM Journal on Optimization}, 22(2):649--673, 2012.

\bibitem{LanAccVI}
Y.~Chen, G.~Lan, and Y.~Ouyang.
\newblock Accelerated schemes for a class of variational inequalities.
\newblock {\em Mathematical Programming}, 165(1):113--149, 2017.

\bibitem{Lan-VI-13}
C.~D. Dang and G.~Lan.
\newblock On the convergence properties of non-{E}uclidean extragradient
  methods for variational inequalities with generalized monotone operators.
\newblock {\em Computational Optimization and Applications}, 60(2):277--310,
  2015.

\bibitem{LanSBMD}
C.~D. Dang and G.~Lan.
\newblock Stochastic block mirror descent methods for nonsmooth and stochastic
  optimization.
\newblock {\em SIAM Journal on Optimization}, 25(2):856--881, 2015.

\bibitem{facchinei02finite}
F.~Facchinei and J.-S. Pang.
\newblock {\em Finite-dimensional Variational Inequalities and Complementarity
  Problems. {V}ols. {I,II}}.
\newblock Springer Series in Operations Research. Springer-Verlag, New York,
  2003.

\bibitem{IusemExtra2017}
A.~Iusem, A.~Jofre, R.~I. Oliveira, and P.~Thompson.
\newblock Extragradient method with variance reduction for stochastic
  variational inequalities.
\newblock {\em SIAM Journal on Optimization}, 27(2):686--724, 2017.

\bibitem{IusemIncrementalVI}
A.~Iusem, A.~Jofre, and P.~Thompson.
\newblock Incremental constraint projection methods for monotone stochastic
  variational inequalities.
\newblock {\em Mathematics of Operations Research}, 2017.
\newblock accepted.

\bibitem{xu2008}
H.~Jiang and H.~Xu.
\newblock Stochastic approximation approaches to the stochastic variational
  inequality problem.
\newblock {\em IEEE Transactions on Automatic Control}, 53:1462--1475, 2008.

\bibitem{Nem11}
A.~Juditsky, A.~Nemirovski, and C.~Tauvel.
\newblock Solving variational inequalities with stochastic mirror-prox
  algorithm.
\newblock {\em Stochastic Systems}, 1(1):17--58, 2011.

\bibitem{Aswin17arxiv}
A.~Kannan and U.~V. Shanbhag.
\newblock Optimal stochastic extragradient schemes for pseudomonotone
  stochastic variational inequality problems and their variants.
\newblock arXiv preprint: arXiv:1410.1628.

\bibitem{Aswin-ACC14}
A.~Kannan and U.~V. Shanbhag.
\newblock The pseudomonotone stochastic variational inequality problem:
  Analytical statements and stochastic extragradient schemes.
\newblock {\em Proceedings of the American Control Conference (ACC)},
  (2930--2935), 2014.

\bibitem{KShKim11}
A.~Kannan, U.~V. Shanbhag, and H.~M. Kim.
\newblock Strategic behavior in power markets under uncertainty.
\newblock {\em Energy Systems}, 2:115--141, 2011.

\bibitem{KShKim12}
A.~Kannan, U.~V. Shanbhag, and H.~M. Kim.
\newblock Addressing supply-side risk in uncertain power markets: stochastic
  {N}ash models, scalable algorithms and error analysis.
\newblock {\em Optimization Methods and Software}, 28:1095--1138, 2013.

\bibitem{Kelly98}
F.~Kelly, A.~Maulloo, and D.~Tan.
\newblock Rate control for communication networks: shadow prices, proportional
  fairness, and stability.
\newblock {\em Journal of the Operational Research Society}, 49:237--252, 1998.

\bibitem{korp76}
G.~M. Korpelevich.
\newblock An extragradient method for finding saddle points and for other
  problems.
\newblock {\em Eknomika i Matematicheskie Metody}, 12(4):747–--756, 1976.

\bibitem{koshal11single2}
J.~Koshal, A.~Nedi\'{c}, and U.~V. Shanbhag.
\newblock Single timescale stochastic approximation for stochastic {N}ash games
  in cognitive radio systems.
\newblock In {\em Proceedings of the 17th International Conference on Digital
  Signal Processing (DSP)}, pages 1--8, 2011.

\bibitem{koshal12regularized}
J.~Koshal, A.~Nedi\'{c}, and U.~V. Shanbhag.
\newblock Regularized iterative stochastic approximation methods for
  variational inequality problems.
\newblock {\em IEEE Transactions on Automatic Control}, 58(3):594--609, 2013.

\bibitem{TsengCD1}
Z.~Q. Luo and P.~Tseng.
\newblock On the convergence of the coordinate descent method for convex
  differentiable minimization.
\newblock {\em Journal of Optimization Theory and Applications}, 72(1):7--35,
  1992.

\bibitem{TsengCD2}
Z.~Q. Luo and P.~Tseng.
\newblock Error bounds and convergence analysis of feasible descent methods: a
  general approach.
\newblock {\em Annals of Operations Research}, 46(1):157–--178, 1993.

\bibitem{Richtarik15}
J.~Marecek, P.~Richtarik, and M.~Takac.
\newblock Distributed block coordinate descent for minimizing partially
  separable functions.
\newblock {\em Numerical Analysis and Optimization, Springer Proceedings in
  Math. and Statistics}, 134:261--288, 2015.

\bibitem{Nedic14}
A.~Nedi\'c and S.~Lee.
\newblock On stochastic subgradient mirror-descent algorithm with weighted
  averaging.
\newblock {\em SIAM Journal on Optimization}, 24(1):84--107, 2014.

\bibitem{Nem04}
A.~Nemirovski.
\newblock Prox-method with rate of convergence {O}(1/t) for variational
  inequalities with {L}ipschitz continuous monotone operators and smooth
  convex-concave saddle point problems”.
\newblock {\em SIAM Journal on Optimization}, 15:229--251, 2004.

\bibitem{nemirovski_robust_2009}
A.~Nemirovski, A.~Juditsky, G.~Lan, and A.~Shapiro.
\newblock Robust stochastic approximation approach to stochastic programming.
\newblock {\em {SIAM} Journal on Optimization}, 19(4):1574--1609, 2009.

\bibitem{nest10}
Y.~E. Nesterov.
\newblock Efficiency of coordinate descent methods on huge-scale optimization
  problems.
\newblock {\em SIAM Journal on Optimization}, 22(2):341--–362, 2010.

\bibitem{ortega70}
J.~M. Ortega and W.~C. Rheinboldt.
\newblock {\em Iterative solution of nonlinear equations in several variables}.
\newblock Academic Press, 1970.

\bibitem{Polyak92}
B.~Polyak and A.~Juditsky.
\newblock Acceleration of stochastic approximation by averaging.
\newblock {\em SIAM Journal on Control and Optimization}, 30(4):838--855, 1992.

\bibitem{Polyak87}
B.~T. Polyak.
\newblock {\em Introduction to Optimization}.
\newblock Optimization Software, Inc., New York, 1987.

\bibitem{Richtarik14}
P.~Richtarik and M.~Takac.
\newblock Iteration complexity of randomized block-coordinate descent methods
  for minimizing a composite function.
\newblock {\em Mathematical Programming}, 144(2):1--38, 2014.

\bibitem{robbins51sa}
H.~Robbins and S.~Monro.
\newblock A stochastic approximation method.
\newblock {\em The Annals of Mathematical Statistics}, 22:400--407, 1951.

\bibitem{Rockafellar98}
R.~Rockafellar and R.~Wets.
\newblock {\em Variational Analysis}.
\newblock Springer, Berlin, 1998.

\bibitem{scutari10monotone}
G.~Scutari, F.~Facchinei, J.-S. Pang, and D.~Palomar.
\newblock Monotone games for cognitive radio systems.
\newblock pages 83--112, 2012.

\bibitem{ShakSrikant07}
S.~Shakkottai and R.~Srikant.
\newblock Network optimization and control.
\newblock {\em Foundations and Trends in Networking}, 2(3):271--379, 2007.

\bibitem{SIGlynn11}
U.~V. Shanbhag, G.~Infanger, and P.~Glynn.
\newblock A complementarity framework for forward contracting under
  uncertainty.
\newblock {\em Operations Research}, 59:810--834, 2011.

\bibitem{shap03sampling}
A.~Shapiro.
\newblock Monte carlo sampling methods.
\newblock In {\em Handbook in Operations Research and Management Science},
  volume~10, pages 353--426. Elsevier Science, Amsterdam, 2003.

\bibitem{Srikant04}
R.~Srikant.
\newblock {\em Mathematics of {I}nternet Congestion Control}.
\newblock Birkhauser, 2004.

\bibitem{TsengCD3}
P.~Tseng and S.~Yun.
\newblock A coordinate gradient descent method for nonsmooth separable
  minimization.
\newblock {\em Mathematical Programming, Series B}, 117(1):387--–423, 2009.

\bibitem{aldo1}
J.~Wang, G.~Scutari, and D.~P. Palomar.
\newblock Robust mimo cognitive radio via game theory.
\newblock {\em IEEE Transactions on Signal Processing}, 59(3):1183--1201, 2011.

\bibitem{wang2015}
M.~Wang and D.~Bertsekas.
\newblock Incremental constraint projection methods for variational
  inequalities.
\newblock {\em Mathematical Programming (Series A.)}, 150(2):321--363, 2015.

\bibitem{wrightCD15}
S.~J. Wright.
\newblock Coordinate descent algorithms.
\newblock {\em Mathematical Programming (Series B.)}, 151(1):3--34, 2015.

\bibitem{Xu10}
H.~Xu.
\newblock Sample average approximation methods for a class of stochastic
  variational inequality problems.
\newblock {\em Asia-Pacific Journal of Operational Research}, 27(1):103--119,
  2010.

\bibitem{Wotao13}
Y.~Xu and W.~Yin.
\newblock A block coordinate descent method for multi-convex optimization with
  applications to nonnegative tensor factorization and completion.
\newblock {\em SIAM Journal on Imaging Sciences}, 6(3):1758--1789, 2013.

\bibitem{yin09nash2}
H.~Yin, U.~V. Shanbhag, and P.~G. Mehta.
\newblock Nash equilibrium problems with scaled congestion costs and shared
  constraints.
\newblock {\em IEEE Transactions of Automatic Control}, 56(7):1702--1708, 2009.

\bibitem{Farzad-CDC14}
F.~Yousefian, A.~Nedi\'c, and U.~V. Shanbhag.
\newblock Optimal robust smoothing extragradient algorithms for stochastic
  variational inequality problems.
\newblock {\em 53rd {IEEE} Conference on Decision and Control}, pages
  5831--5836, 2014.

\bibitem{Farzad2}
F.~Yousefian, A.~Nedi\'c, and U.~V. Shanbhag.
\newblock Self-tuned stochastic approximation schemes for non-{L}ipschitzian
  stochastic multi-user optimization and {N}ash games.
\newblock {\em IEEE Transactions on Automatic Control}, 61(7):1753 --1766,
  2016.

\bibitem{Farzad3}
F.~Yousefian, A.~Nedi\'c, and U.~V. Shanbhag.
\newblock On smoothing, regularization, and averaging in stochastic
  approximation methods for stochastic variational inequality problems.
\newblock {\em Mathematical Programming (Series B.)}, 165(1):391--431, 2017.

\end{thebibliography}
\end{document}